\documentclass[a4paper]{amsart}

\usepackage{amsmath,amssymb,amsthm}
\usepackage{indentfirst,color}
\usepackage[colorlinks,citecolor=red,linkcolor=blue,urlcolor=cyan]{hyperref}

\theoremstyle{plain}
\newtheorem{theorem}{Theorem}[section]
\newtheorem{proposition}[theorem]{Proposition}
\newtheorem{lemma}[theorem]{Lemma}
\newtheorem{corollary}[theorem]{Corollary}
\theoremstyle{definition}
\newtheorem{definition}[theorem]{Definition}
\theoremstyle{remark}
\newtheorem{remark}[theorem]{Remark}
\numberwithin{equation}{section}

\newcommand{\BB}{\mathbb{B}}
\newcommand{\CC}{\mathbb{C}}

\newcommand{\ddbar}{\partial\bar\partial}
\newcommand{\dbar}{\bar{\partial}}
\newcommand{\eps}{\varepsilon}
\newcommand{\calE}{\mathcal{E}}
\newcommand{\calO}{\mathcal{O}}

\newcommand{\Id}{\textup{Id}}

\newcommand{\utr}{\underline{\CC^r}}
\newcommand{\om}{\omega_{\mathrm{e}}}
\DeclareMathOperator{\Dom}{Dom}

\DeclareMathOperator{\supp}{supp}
\newcommand{\inner}[1]{\langle#1\rangle}
\newcommand{\iinner}[1]{\langle\!\langle#1\rangle\!\rangle}

\begin{document}

\title[From $L^2$-Estimates to \(p\)-Positivity]{From the Optimal $L^2$-Estimates \((p,n)\) and \((n,p)\)-Forms to Uniform Griffiths \(p\)-Positivity via Convolution and Restriction}

\author[Z. Liu]{Zhuo Liu}
\address{Mathematical Science Research Center,  Chongqing University of Technology,  No. 69, Hongguang
Avenue, Banan District, Chongqing 400054, China.}
\email{liuzhuo@amss.ac.cn,liuzhuo@cqut.edu.cn}

\author[X.Zhang]{Xujun Zhang}
\address{School of Mathematics and Information Science, Guangxi University, Nanning, Guangxi, 530004, P.R.China}
\address{Guangxi Center for Mathematical Research, Guangxi University, Nanning, Guangxi, 530004, P. R. China}
\address{Tianyuan Mathematical Center in Southwest China, Guangxi Base, Nanning, Guangxi, 530004, P. R. China}
\email{xujunzhang@amss.ac.cn}

\subjclass[2020]{32L15, 32U05, 32F10}

\keywords{$p$-Plurisubharmonic function, Singular Hermitian metric, Optimal \(L^2\)-estimate, Uniform (dual) Nakano/Griffiths positivity}

\begin{abstract}
We prove that the optimal \(L^2\)-estimate property for \((p,n)\)-forms is preserved under standard convolution regularization. As an application, we show that a singular Hermitian metric satisfying the optimal \(L^2\)-estimate property for \((p,n)\)-forms is uniform Griffiths \(p\)-positive. By establishing the restriction property, we prove that a singular Hermitian metric satisfying the optimal \(L^2\)-estimate property for \((n,p)\)-forms is also uniform Griffiths \(p\)-positive.
\end{abstract}
\maketitle

\section{Introduction}
Plurisubharmonicity and Griffiths/Nakano positivity play fundamental roles in several complex variables and complex geometry, yielding numerous important results, including H\"ormander's $L^2$ estimates for the $\overline{\partial}$-equations (\cite{Dem-82, Hor65}), Ohsawa-Takegoshi's $L^2$-extension theorem (\cite{OT87}),  the optimal $L^2$ extension theorem (\cite{Blocki13, GuanZhou12, GuanZhou15}) and so on. Plurisubharmonic functions, which need not be smooth, offer significant advantages in addressing many problems. Similarly, there has been growing interest in singular metrics with positivity properties for vector bundles.​ A singular Hermitian metric $h$ on a holomorphic vector bundle $E$  is a measurable map from the base manifold $X$ to the space of non-negative Hermitian forms on the fibers  satisfying $0<\det h<+\infty$ almost everywhere. The metric $h$	is called \textit{Griffiths semi-positive}  if $|u|^2_{h^*}$ is plurisubharmonic for any local holomorphic section $u$ of the dual bundle $E^*$ (\cite{BP08, PT18, Rau15}). For line bundles, Griffiths semi-positivity of ​$h$ is equivalent to the plurisubharmonicity of the local weight  $\varphi := -\log h$.

Recently, in \cite{DNW21, DNWZ22, DWZZ18}, Deng, Ning, Wang, Zhang, Zhou established the converse $L^2$ theory by giving alternative characterizations of plurisubharmonicity and Griffiths/Nakano positivity in terms of various $L^2$-conditions for $\overline{\partial}$. These characterizations are significant precisely because they avoid metric differentiations, thereby offering workable definitions of various positivity notions for singular Hermitian metrics on holomorphic vector bundles. For more results on characterizations of positivity, we refer the reader  to \cite{DNW21, DNWZ22, DWZZ18,  DengZhang21, HosonoInayama, Inayama2021, Inayama2022, LMNZ24, LMNZ26, Liu2025, Liu2025v2, Watanabe} and references therein.

The notion of partial positivity also has a rich history. The classical \(p\)-convexity of Andreotti-Grauert \cite{AG62} and Siu's Griffiths \(p\)-positivity \cite{Siu82} are relatively weak ``existence'' conditions: they only require at least \(n-p+1\) positive directions (e.g., \(n-p+1\) positive eigenvalues for line bundles). While this suffices for classical cohomology theories, it is insufficient for the optimal \(L^2\)-estimate property with arbitrary forms, which requires a ``uniform" positivity condition.

For line bundles, this uniform condition precisely means that the sum of  {any} \(p\) eigenvalues is non-negative, which corresponds to the \(p\)-plurisubharmonicity and \(p\)-convexity of Harvey--Lawson \cite{HL09a, HL09b, HL11, HL12, HL13}. Its vector bundle analogues are the Nakano \(p\)-positivity of Siu \cite{Siu82} and the \(p\)-positivity of Demailly \cite{Dem-82}. In parallel, Yang \cite{Yang18, Yang20} introduced and studied the RC-positivity and uniformly RC-positivity, providing another perspective on such existence and uniform positivity conditions.

To state these results precisely, we recall the \(L^2\)-estimate
property. Throughout, \(D\subset\CC^n\) is a domain,
\(\om=i\sum_{j=1}^n dz_j\wedge d\bar z_j\) is the Euclidean K\"ahler
metric, and \(\utr=D\times\CC^r\) is the trivial holomorphic vector
bundle of rank \(r\) over \(D\).

\begin{definition}\label{def:L2est-intro}
We say that \((D,\om;\utr,h)\) satisfies the \emph{optimal
\(L^2\)-estimate property for \((p,n)\)-forms} if for every smooth
strictly \(p\)-plurisubharmonic function \(\phi\) on \(D\), and every
\(\dbar\)-closed \(\utr\)-valued \((p,n)\)-form \(f\) on \(D\) with
finite right-hand side, there exists
\(u\in L^2_{(p,n-1)}(D,\utr,he^{-\phi})\) such that \(\dbar u=f\) and
\begin{equation*} 
  \int_D|u|^2 e^{-\phi}\,dV_{\om}
  \le\int_D\inner{B_{\om,\phi}^{-1}f,f}e^{-\phi}\,dV_{\om},
\end{equation*}
where \(B_{\om,\phi}=[i\ddbar\phi\otimes\Id_{\utr},\Lambda_{\om}]\).
The optimal \(L^2\)-estimate property for \((n,p)\)-forms is defined
analogously.
\end{definition}

For partial positivity in this uniform sense, Inayama \cite{Inayama2022} proved that a smooth function on a domain \(D\subset\CC^n\) is \(p\)-plurisubharmonic with respect to the Euclidean metric \(\om\) if and only if it satisfies the optimal \(L^2\)-estimate property for \((n,p)\)-forms, and also established a characterization of RC-positivity via the optimal \(L^2\)-estimate property for \((n,n)\)-forms. Watanabe \cite{Watanabe} further proved that a smooth Hermitian metric is uniform Nakano \(p\)-semi-positive if and only if it satisfies the optimal \(L^2\)-estimate property for \((n,p)\)-forms, and dual Nakano \(p\)-semi-positive if and only if it satisfies the optimal \(L^2\)-estimate property for \((p,n)\)-forms.

To justify the use of \(L^2\)-estimate conditions as workable substitutes for the curvature positivity of singular Hermitian metrics, it is crucial to verify that they indeed imply geometric positivity. In this direction, Deng--Ning--Wang \cite[Remark~1.9]{DNW21} and Inayama \cite[Question~7.3]{Inayama2022} posed the fundamental question of whether the optimal \(L^2\)-estimate property for \((n,1)\)-forms implies Griffiths positivity for singular Hermitian metrics. This was answered affirmatively by Liu \cite{Liu2025} in the continuous case, and completely resolved by Liu \cite{Liu2025v2} in the general singular case, where it was proved that the optimal or multiple coarse \(L^2\)-estimate property for either \((n,1)\) or \((1,n)\)-forms implies Griffiths positivity.

It is natural to ask whether the analogue of these questions holds for partial positivity. Namely, does a singular Hermitian metric satisfying the optimal \(L^2\)-estimate property for \((p,n)\) or \((n,p)\)-forms necessarily satisfy the corresponding uniform Griffiths \(p\)-positivity? In this paper we answer this question affirmatively under a necessary semi-continuity assumption.

It should be noted that there is a slight difference in the setting compared to \cite{Liu2025v2}. The framework in \cite{Liu2025v2} requires the local \(L^2\)-estimate property to hold on every Stein subdomain. In contrast, in this paper, we consider the trivial bundle over a domain in \(\CC^n\), where only the global solvability of the \(\bar\partial\)-equation is required, and the test weights are merely \(p\)-plurisubharmonic functions. Nevertheless, the method developed in this paper inherits and generalizes the approach introduced in \cite{Liu2025v2}.

Our first main theorem is the following.

\begin{theorem}\label{thm:main-intro}
Let \(h\) be a singular Hermitian metric on
\(\utr\) such that the norm $|s|^2_{h^*}$ is upper semi-continuous for every holomorphic local section $s$ of $\utr^*$. Assume that \((D,\om;\utr,h)\) satisfies the optimal
\(L^2\)-estimate property for \((p,n)\)-forms. Then for every relatively compact open subset \(U\Subset D\), there
exists an increasing sequence \(\{h_k\}\) of smooth dual Nakano \(p\)-semi-positive Hermitian metrics on \(U\) such that $h_k^*$ converging to \(h^*\) in
\(L^1(U)\).
\end{theorem}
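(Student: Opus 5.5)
The approach is convolution followed by Watanabe's smooth characterization. Extend nothing; simply work on \(U\Subset D\). Set \(h_\eps^* := h^*\ast\rho_\eps\), where \(\rho_\eps\) is a standard radial mollifier and the convolution is taken entrywise on the matrix \(h^*\). This is defined on \(D_\eps=\{z\in D:\, d(z,\partial D)>\eps\}\supset U\) for small \(\eps\), and we put \(h_\eps=(h_\eps^*)^{-1}\). The metric \(h_\eps^*\) is smooth, and positive definite wherever \(h^*\) is nondegenerate on a set of positive measure, which holds since \(0<\det h<\infty\) a.e.

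The key claim, announced in the abstract, is that the optimal \(L^2\)-estimate property for \((p,n)\)-forms passes from \((D,\om;\utr,h)\) to \((D_\eps,\om;\utr,h_\eps)\). To prove it I would use the duality formulation: the optimal estimate is equivalent to the inequality
\[
\Big|\int \inner{f,\alpha}_{h}e^{-\phi}\Big|^2\le \int\inner{B^{-1}f,f}_h e^{-\phi}\cdot \int|\dbar^*_\phi\alpha|^2_h e^{-\phi}
\]
for compactly supported test forms \(\alpha\). Since \(\phi\) is only strictly \(p\)-psh, I would work with \(h^*\), i.e.\ the dual formulation in which the \(\utr^*\)-valued data appear linearly against \(h^*\).

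The translation invariance of the Euclidean setting is what makes this work. For each \(w\) with \(|w|<\eps\), the translated data \(h(\cdot+w)\) satisfy the optimal property on \(D-w\supset D_\eps\), with translated weight \(\phi(\cdot+w)\), which is still strictly \(p\)-psh. Averaging the dual inequalities against \(\rho_\eps(w)\,dw\) and applying Cauchy--Schwarz in \(w\) then yields the inequality for \(h_\eps^*\).

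The delicate point is that the weight \(\phi\) must be held fixed while \(h\) is translated. Equivalently, I would translate \(f\) and \(\alpha\) rather than \(\phi\). For this I would only use weights \(\phi\) given on \(D_\eps\) and require the estimate for \(h\) on shrunken domains. The resulting loss in the constant, of size \(\sup_{|w|<\eps}\) of the variation of \(\phi\), should be handled by first replacing \(\phi\) by \(\phi+\delta|z|^2\) and letting \(\delta,\eps'\to0\) along a subsequence. I expect this to be the main obstacle: making the averaging exact enough to keep the optimal constant, since only an optimal (not coarse) estimate feeds into Watanabe's theorem.

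Next, once \(h_\eps\) is smooth and satisfies the optimal \(L^2\)-estimate for \((p,n)\)-forms on \(D_\eps\), Watanabe's theorem gives that \(h_\eps\) is dual Nakano \(p\)-semi-positive on \(D_\eps\supset U\).

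Monotonicity and convergence use the upper semicontinuity hypothesis. The optimal \((p,n)\) property implies, at least for \(p\ge1\), that \(|s|^2_{h^*}\) is plurisubharmonic along each local holomorphic section \(s\). This follows from the \(p\)-psh case of the Liu argument, or from the fact that mean-value inequalities survive averaging and u.s.c.\ upgrades subharmonicity a.e.\ to genuine subharmonicity. With that in hand, each \(|s|^2_{h^*_\eps}=|s|^2_{h^*}\ast\rho_\eps\), for constant \(s\), decreases to \(|s|^2_{h^*}\) as \(\eps\downarrow0\), by the standard monotonicity of radial mollifications of psh functions. Polarizing over constant sections shows that \(h_\eps^*\) decreases, so \(h_\eps\) increases. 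Moreover \(h^*_{\eps}\to h^*\) in \(L^1_{loc}\), hence in \(L^1(U)\).

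Finally, taking \(\eps_k\downarrow0\) and \(h_k:=h_{\eps_k}\) gives the required increasing sequence of smooth dual Nakano \(p\)-semi-positive metrics on \(U\) with \(h_k^*\to h^*\) in \(L^1(U)\).
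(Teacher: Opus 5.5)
Your architecture matches the paper's: mollify \(h^*\), write the estimate so the metric enters linearly through \(h^*\), average over translations, run the Watanabe/Deng--Ning--Wang--Zhou converse argument, and get monotonicity from subharmonicity. The gap is in the step you call the main obstacle: you misdiagnose it, and your sketched fix would not close it. The hypothesis quantifies over \emph{all} smooth strictly \(p\)-psh weights on \(D\), and this class is translation invariant. So \((D+y,\om;\utr,h(\cdot-y))\) satisfies the optimal estimate for every smooth strictly \(p\)-psh weight on \(D+y\), in particular for one \emph{fixed} \(\phi\) defined on a neighbourhood of \(\overline D\) containing all \(D+y\), \(|y|<\eps\). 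No translation of \(\phi\) and no loss of constant occurs, and with \(\phi\) fixed the averaging is exact; this is how Theorem~\ref{thm:conv} proceeds. Accepting a factor of size \(e^{\mathrm{osc}\,\phi}\) and then sending \(\delta,\eps'\to0\) cannot give the sharp inequality for a \emph{fixed} \(h_\eps\), which is what you need. Relatedly, do not aim at the full optimal property of \((D_\eps,\om;\utr,h_\eps)\) with arbitrary weights on \(D_\eps\) and then quote Watanabe's theorem. Such weights need not extend to \(D-w\), and the hypothesis says nothing about shrunken domains. What averaging delivers is the Bochner inequality for test forms compactly supported in \(D_\eps\), with weights defined near \(\overline D\) (these include the quadratic weights used in the converse argument). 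You then run Watanabe's argument on that, as in Theorem~\ref{thm:dual-nak}.

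On the averaging itself: with \(f\) held fixed, your dual inequality is not linear in \(h^*\), because \(\int\inner{B^{-1}f,f}_h e^{-\phi}\) is linear in \(h\). Averaging that factor gives \(\int\inner{B^{-1}f,f}_{h*\rho_\eps}e^{-\phi}\). The arithmetic mean \(h*\rho_\eps\) dominates the harmonic mean \(h_\eps=(h^{-1}*\rho_\eps)^{-1}\), so the resulting inequality is weaker than the one needed for \(h_\eps\). Cauchy--Schwarz in \(w\) does close if you also write \(f=h^*g\) with \(g\) fixed, so that all three quantities are linear in \(h^*\). The paper does something simpler. It takes \(f=B_\phi\alpha\), which is legitimate since every \((p,n)\)-form is \(\dbar\)-closed, and gets \(\iinner{B_\phi\alpha,\alpha}\le\|\dbar^*\alpha\|^2\). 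After the substitution \(\eta=e^{-\phi}h^T\alpha\), both sides are linear in \(h^{-1}\) (Lemmas~\ref{lem:Bochner} and~\ref{lem:adj}), so no Cauchy--Schwarz is needed.

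Finally, for \(p\ge2\) the \((p,n)\) property does not make \(|s|^2_{h^*}\) plurisubharmonic, only \(p\)-psh. That is still enough for monotonicity, since radial mollification only needs subharmonicity. The smooth \(p\)-psh functions \(|s|^2_{h_\eps^*}\) converge in \(L^1_{\mathrm{loc}}\) to \(|s|^2_{h^*}\), which is therefore a.e.\ equal to a \(p\)-psh function.
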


Let \(a=(a_{jk})\) be a constant positive definite Hermitian \(n\times n\) matrix, and let
\[
  \omega_a
  = i \sum_{j,k=1}^{n} a_{jk}\, dz_j \wedge d\bar{z}_k .
\]
After a change of coordinates, it becomes the Euclidean metric \(\om\). Apply
Theorem~\ref{thm:dual-nak} and we obtain the following result.

\begin{theorem}\label{cor:anisotropic-intro}
Let \(h\) be a singular Hermitian metric on
\(\utr\) such that the norm $h^*$ is  locally integrable. Assume that \((D,\omega_a;\utr,h)\) satisfies the optimal
\(L^2\)-estimate property for \((p,n)\)-forms. Then for every relatively compact open subset \(U\Subset D\), there
exists an increasing sequence \(\{h_k\}\) of smooth dual Nakano \(p\)-semi-positive Hermitian metrics with respect to \(\omega_a\) on \(U\) such that $h_k^*$ converging to \(h^*\) in
\(L^1(U)\).
\end{theorem}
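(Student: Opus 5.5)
The plan is to reduce this statement to the Euclidean case, namely Theorem~\ref{thm:dual-nak}, by a complex linear change of coordinates. Note that the hypothesis here is only local integrability of \(h^*\), not upper semi-continuity. So the version of Theorem~\ref{thm:dual-nak} I invoke is the one for \(L^1_{loc}\) dual norms, in which convolution regularization preserves the optimal \(L^2\)-estimate property for \((p,n)\)-forms. I take that as given.

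Since \(a\) is positive definite Hermitian, I write \(a = A^*A\) with \(A\in GL_n(\CC)\) and set \(w = Az\), i.e.\ \(T(z)=Az\). Then \(T^*\om = \omega_a\) (up to the harmless conjugation convention), so \(T:(D,\omega_a)\to(D',\om)\) is a biholomorphic isometry, where \(D'=T(D)\). I define \(h' = (T^{-1})^*h\) on \(\utr\) over \(D'\); this is just \(h\circ T^{-1}\), since the bundle is trivial and the frame is unchanged. I check three things.
\begin{enumerate}
\item \((h')^*\) is locally integrable, because \(T\) is linear with constant Jacobian.
\item A smooth function \(\phi\) is strictly \(p\)-plurisubharmonic with respect to \(\om\) on \(D'\) if and only if \(\phi\circ T\) is strictly \(p\)-plurisubharmonic with respect to \(\omega_a\). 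This holds because the eigenvalues of \(i\ddbar(\phi\circ T)\) relative to \(\omega_a=T^*\om\) equal those of \(i\ddbar\phi\) relative to \(\om\). (Here the definition of \(p\)-plurisubharmonicity in the \(\omega_a\) setting must be read relative to \(\omega_a\).)
\item Pullback by an isometric biholomorphism commutates with \(\dbar\), \(\Lambda\) and the pointwise norms, and \(dV_{\omega_a}=T^*dV_{\om}\). Hence \(B_{\omega_a,\phi\circ T}=T^*B_{\om,\phi}\), and both sides of the optimal estimate are invariant.
\end{enumerate}

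From these, \((D',\om;\utr,h')\) satisfies the optimal \(L^2\)-estimate property for \((p,n)\)-forms. Given a \(\dbar\)-closed \(f\) on \(D'\) and a weight \(\phi\), I pull back to \(D\), solve there, and push the solution forward.

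Next I apply Theorem~\ref{thm:dual-nak} on \(U'=T(U)\Subset D'\). This gives an increasing sequence of smooth dual Nakano \(p\)-semi-positive metrics \(h'_k\) (with respect to \(\om\)) with \((h'_k)^*\to(h')^*\) in \(L^1(U')\). I set \(h_k=T^*h'_k\). These are smooth and increasing, and \(h_k^*\to h^*\) in \(L^1(U)\) by the change of variables formula.

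Dual Nakano \(p\)-semi-positivity with respect to \(\omega_a\) transfers because the curvature transforms as \(\Theta_{h_k}=T^*\Theta_{h'_k}\). The dual-Nakano \(p\)-condition is a pointwise algebraic condition on the curvature tensor, evaluated in \(\om\)-unitary frames, and \(T\) maps \(\omega_a\)-unitary frames to \(\om\)-unitary frames.

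The only real obstacle is bookkeeping. I have to make sure the definitions of \(p\)-plurisubharmonicity and dual Nakano \(p\)-positivity relative to \(\omega_a\) are exactly the \(T\)-pullbacks of the Euclidean ones, since they depend on the Kähler metric through \(\Lambda\) and unitary frames. Everything is linear algebra at a point, so I expect no analytic difficulty.
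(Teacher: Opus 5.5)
Your argument is the paper's own: the linear change of coordinates \(w=Az\) carries \((D,\omega_a)\) isometrically onto a Euclidean domain, both sides of the optimal estimate and the dual Nakano \(p\)-condition are preserved, and Theorem~\ref{thm:dual-nak}, applied to the convolutions \(h_\delta\) and then pulled back, gives the result. One point that both you and the paper leave implicit is monotonicity, which Theorem~\ref{thm:dual-nak} does not itself assert. It holds for the following reason: for each constant \(v\), the function \(|v|^2_{h_\delta^*}=\rho_\delta*|v|^2_{h^*}\) is \(p\)-psh, hence subharmonic, by Propositions~\ref{prop:nak-grif} and~\ref{prop:char-grif}; therefore \(|v|^2_{h^*}\) is a.e.\ equal to a subharmonic function, its radial mollifications decrease as \(\delta\downarrow0\), and so \(h_\delta^*\) decreases and \(h_\delta\) increases.
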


\begin{remark}
 In fact, \(\{h_k\}\) increasingly convergent to a singular Hermitian metric $\hat h$ which satisfies the optimal
\(L^2\)-estimate property for \((p,n)\)-forms with respect to \(\omega_a\) and equal to $h$ almost everywhere.
\end{remark}

Consequently, we have the following.

\begin{corollary}\label{cor:dual-susc-intro}
Let \(h\) be a singular Hermitian metric on
\(\utr\) such that the norm $|s|^2_{h^*}$ is strongly upper semi-continuous for every holomorphic local section $s$ of $\utr^*$. Assume that $h$ is dual Nakano \(p\)-semi-positive with respect
to \(\omega_a\). Then \(h\) is uniform Griffiths \(p\)-positive with respect to \(\omega_a\).
\end{corollary}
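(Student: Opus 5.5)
We plan to deduce Corollary~\ref{cor:dual-susc-intro} from Theorem~\ref{cor:anisotropic-intro} by approximation and a pointwise limiting argument. After a linear change of coordinates we may assume \(\omega_a=\om\). Dual Nakano \(p\)-semi-positivity of \(h\) (in the singular sense, i.e.\ via the optimal \(L^2\)-estimate property for \((p,n)\)-forms) together with strong upper semi-continuity of \(|s|^2_{h^*}\) gives in particular that \(h^*\) is locally bounded above, hence locally integrable. So Theorem~\ref{cor:anisotropic-intro} applies: on each \(U\Subset D\) there is an increasing sequence \(h_k\) of smooth dual Nakano \(p\)-semi-positive metrics with \(h_k^*\to h^*\) in \(L^1(U)\). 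Since \(h_k\) increases, \(h_k^*\) decreases, and so \(|s|^2_{h_k^*}\) decreases pointwise to some limit \(\psi_s\) with \(\psi_s=|s|^2_{h^*}\) almost everywhere.

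Next we use the smooth pointwise fact: a smooth dual Nakano \(p\)-semi-positive metric is uniform Griffiths \(p\)-positive. Concretely, dual Nakano \(p\)-semi-positivity is a positivity condition on the curvature acting on \((p,0)\)-type tensors in \(\Lambda^pT\otimes E^*\). Restricting it to decomposable tensors \(v_1\wedge\cdots\wedge v_p\otimes s\) shows that for every holomorphic section \(s\) of \(\utr^*\), the function \(|s|^2_{h_k^*}\) (equivalently \(\log|s|^2_{h_k^*}\) where relevant) is \(p\)-plurisubharmonic with respect to \(\om\): the trace of \(i\ddbar|s|^2_{h_k^*}\) on any complex \(p\)-plane is nonnegative. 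This is essentially the usual Griffiths computation \(i\ddbar|s|^2_{h^*}\ge -\inner{i\Theta_{h^*}s,s}\), summed over an orthonormal basis of the \(p\)-plane.

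Then we pass to the limit. A decreasing limit of \(p\)-plurisubharmonic functions is \(p\)-plurisubharmonic (or \(\equiv-\infty\)), so \(\psi_s\) is \(p\)-psh on \(U\), and \(\psi_s=|s|^2_{h^*}\) a.e. The main obstacle is upgrading this a.e.\ equality to the everywhere equality needed for the definition of uniform Griffiths \(p\)-positivity; this is exactly where strong upper semi-continuity enters. A \(p\)-psh function satisfies \(\psi_s(z)=\operatorname{ess\,limsup}_{w\to z}\psi_s(w)\), via the sub-mean value property over \(p\)-dimensional discs/balls in complex \(p\)-planes, averaged to full balls. Strong upper semi-continuity of \(|s|^2_{h^*}\) means \(|s|^2_{h^*}(z)=\operatorname{ess\,limsup}_{w\to z}|s|^2_{h^*}(w)\). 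Since the two functions agree a.e., their essential limsups agree, hence \(\psi_s=|s|^2_{h^*}\) everywhere on \(U\).

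Therefore \(|s|^2_{h^*}\) is \(p\)-plurisubharmonic for every local holomorphic \(s\), on every \(U\Subset D\), which is the definition of uniform Griffiths \(p\)-positivity with respect to \(\om\). Undoing the coordinate change gives the statement for \(\omega_a\).
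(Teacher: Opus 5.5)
Your argument is correct and follows the paper's route: regularize by convolution to smooth dual Nakano \(p\)-semi-positive metrics (Theorem~\ref{thm:dual-nak}/Theorem~\ref{cor:anisotropic-intro}), deduce \(p\)-plurisubharmonicity of \(|s|^2_{h_k^*}\) via Propositions~\ref{prop:nak-grif} and~\ref{prop:char-grif}, and use strong upper semi-continuity to upgrade the a.e.\ identification of the limit with \(|s|^2_{h^*}\) to an everywhere one. The only difference is the final step: you use the monotonicity of \(h_k\) together with the essential-limsup characterization, whereas the paper applies Proposition~\ref{prop:sing-grif-basic}(d), which needs only \(L^1_{\mathrm{loc}}\) convergence and so does not depend on the monotonicity that Theorem~\ref{cor:anisotropic-intro} asserts but does not separately prove.
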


Recall that Griffiths semi-positivity is equivalent to uniform Griffiths \(n\)-semi-positivity with respect to \(\omega_a\) for \textit{all} constant positive definite Hermitian matrices \(a\). Therefore,  we obtain the following result.

\begin{corollary}\label{cor:all-metrics-intro}
Let \(h\) be a singular Hermitian metric on
\(\utr\) such that the norm $|s|^2_{h^*}$ is strongly upper semi-continuous for every holomorphic local section $s$ of $\utr^*$. If  $h$ is dual Nakano
\(p\)-semi-positive with respect to \(\omega_a\) for any constant positive definite Hermitian matrix $a$, then $h$
is Griffiths semi-positive.
\end{corollary}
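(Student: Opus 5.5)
The plan is to derive Corollary~\ref{cor:all-metrics-intro} from Corollary~\ref{cor:dual-susc-intro} and the characterization of Griffiths semi-positivity recalled just before the statement.

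First, fix an arbitrary constant positive definite Hermitian matrix \(a\). By hypothesis \(h\) is dual Nakano \(p\)-semi-positive with respect to \(\omega_a\). The norms \(|s|^2_{h^*}\) are strongly upper semi-continuous, so Corollary~\ref{cor:dual-susc-intro} applies and shows that \(h\) is uniform Griffiths \(p\)-positive with respect to \(\omega_a\).

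Second, I need uniform Griffiths \(n\)-semi-positivity with respect to every \(\omega_a\), which is the form in which the recalled equivalence is stated. For this I will check that uniform Griffiths \(p\)-positivity implies uniform Griffiths \(p'\)-positivity for every \(p'\ge p\), in particular for \(p'=n\).
\begin{itemize}
\item In the smooth setting, \(p\)-positivity says that sums of any \(p\) eigenvalues of the curvature in the direction of a vector \(\xi\), taken relative to \(\omega_a\), are nonnegative.
\item If every sum of \(p\) of the eigenvalues \(\lambda_1\le\dots\le\lambda_n\) is nonnegative, then every sum of \(p'\ge p\) of them is nonnegative. Indeed, a sum of \(p'\) eigenvalues is at least the sum of the \(p'\) smallest. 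That average is at least the average of the \(p\) smallest, which is \(\ge0\).
\item For singular metrics I will verify the same monotonicity at the level of the definition, that is, via the \(p\)-plurisubharmonicity of \(\log|s|^2_{h^*}\) or \(|s|^2_{h^*}\) with respect to \(\omega_a\). A \(p\)-plurisubharmonic function is \(p'\)-plurisubharmonic for \(p'\ge p\), for the same eigenvalue reason applied to restrictions to affine \(p'\)-planes, or via Harvey--Lawson subaveraging.
\end{itemize}
I expect this to be the main point to check, since it depends on the paper's exact definition of uniform Griffiths \(p\)-positivity. If that definition is phrased through \(p\)-dimensional subspaces, I would argue that \(n\)-plurisubharmonicity (subharmonicity with respect to \(\omega_a\)) follows by averaging the \(p\)-plane Laplacians over all \(p\)-planes.

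Finally, since \(a\) was arbitrary, \(h\) is uniform Griffiths \(n\)-semi-positive with respect to \(\omega_a\) for all \(a\). By the recalled equivalence, \(h\) is Griffiths semi-positive. Concretely, for a holomorphic local section \(s\), the function \(|s|^2_{h^*}\) (or its logarithm) is \(\omega_a\)-subharmonic for every constant \(a\), which means \(\mathrm{tr}(a^{-1}\cdot i\ddbar)\ge0\) in the distributional sense for all \(a>0\). Letting \(a^{-1}\) degenerate to a rank-one projection gives nonnegativity on each complex line. Upper semi-continuity then yields plurisubharmonicity, hence Griffiths semi-positivity.
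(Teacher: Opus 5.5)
Your proposal is correct and follows the paper's route. You apply Corollary \ref{cor:dual-susc-intro} for each constant $a$, pass from $p$- to $n$-positivity (the paper leaves this step implicit; it holds because $p$-psh functions are subharmonic), and then invoke the characterization of Griffiths semi-positivity by uniform $n$-positivity with respect to every $\omega_a$, which you also justify directly by letting $a^{-1}$ degenerate to rank-one projections. One small precision: in that last step, what identifies $|s|^2_{h^*}$ with the psh representative of the positive current $i\ddbar|s|^2_{h^*}$ is its strong upper semi-continuity (or its already established $\omega_a$-subharmonicity), not mere upper semi-continuity.
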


Moreover, Berndtsson--P\u{a}un \cite[Proposition 3.1]{BP08} showed that a Griffiths semi-negative singular Hermitian metric can be locally approximated from above by a decreasing sequence of smooth Griffiths semi-negative Hermitian metrics via convolution. Then it follows from  H\"ormander's \(L^2\)-existence theorem \cite[Chapter~VIII-(6.1) Theorem]{Demailly2012} and  Corollary \ref{cor:all-metrics-intro} that

\begin{theorem}\label{cor:pseudoconvex-intro}
Let \(D\Subset\CC^n\) be pseudoconvex and \(h\) a singular Hermitian metric on
\(\utr\) such that the norm $|s|^2_{h^*}$ is strongly upper semi-continuous for every holomorphic local section $s$ of $\utr^*$. Then  \(h\) is
Griffiths semi-positive if and only if it satisfies the optimal
\(L^2\)-estimate property for \((n,n)\)-forms with respect to every
K\"ahler metric.
\end{theorem}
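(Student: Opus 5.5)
The equivalence has two directions; I treat each separately and use the corollaries just stated.

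\emph{Griffiths semi-positivity implies the estimate.} Assume \(h\) is Griffiths semi-positive. Then \(h^*\) is a Griffiths semi-negative singular metric on the dual bundle, since \(|s|^2_{h^*}\) is plurisubharmonic for holomorphic \(s\).

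First, take a slightly smaller pseudoconvex exhaustion \(D_\delta\Subset D\), for example a sublevel set of a smooth strictly psh exhaustion. On \(D_\delta\), apply Berndtsson--P\u{a}un \cite[Proposition 3.1]{BP08}. This gives smooth Griffiths semi-negative metrics \(g_\epsilon\), obtained by convolving \(h^*\), which decrease to \(h^*\). Then \(h_\epsilon:=g_\epsilon^*\) are smooth Griffiths semi-positive metrics increasing to \(h\).

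Next, fix a K\"ahler metric \(\omega\) on \(D\) and a \(\dbar\)-closed \((n,n)\)-form \(f\). For smooth Griffiths semi-positive \(h_\epsilon\), the curvature operator \([i\Theta_{h_\epsilon},\Lambda_\omega]\) is nonnegative on \((n,n)\)-forms. The reason is that on \((n,n)\)-forms this operator is just the trace of the curvature against \(\omega\) acting on the bundle, and that trace is Griffiths semi-positive. Hence \(B_{\omega,\phi}+[i\Theta_{h_\epsilon},\Lambda_\omega]\ge B_{\omega,\phi}\).

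With this, H\"ormander's theorem \cite[Chapter~VIII-(6.1)]{Demailly2012} on the pseudoconvex, hence complete K\"ahler, domain \(D_\delta\) gives \(u_\epsilon\) with \(\dbar u_\epsilon=f\) and
\[
\int_{D_\delta}|u_\epsilon|^2_{h_\epsilon}e^{-\phi}\,dV\le\int_{D_\delta}\inner{B_{\omega,\phi}^{-1}f,f}_{h_\epsilon}e^{-\phi}\,dV\le\int_{D}\inner{B_{\omega,\phi}^{-1}f,f}_{h}e^{-\phi}\,dV,
\]
using \(h_\epsilon\le h\). Since \(h_\epsilon\ge h_{\epsilon_0}\) for small \(\epsilon\), the \(u_\epsilon\) are bounded in a fixed weighted \(L^2\) space. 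Take a weak limit as \(\epsilon\to0\). Monotone convergence and lower semicontinuity of norms under weak limits give the bound with \(h\) on \(D_\delta\). Then let \(\delta\to0\) by a diagonal weak-limit argument.

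\emph{The estimate implies Griffiths semi-positivity.} Assume the optimal \(L^2\)-estimate for \((n,n)\)-forms holds for every K\"ahler metric. In particular it holds for every \(\omega_a\). The test weights in Definition~\ref{def:L2est-intro} for \(p=n\) are the smooth strictly psh functions, and these are admissible.

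By Theorem~\ref{cor:anisotropic-intro} with \(p=n\), \(h\) is dual Nakano \(n\)-semi-positive with respect to \(\omega_a\) for every \(a\). The metric is a.e.\ the increasing limit of smooth such metrics, and by strong upper semicontinuity \(h\) agrees with that limit. Corollary~\ref{cor:all-metrics-intro} with \(p=n\) then yields Griffiths semi-positivity.

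\emph{Main obstacle.} The delicate step is the curvature inequality in the first direction: that Griffiths semi-positivity makes \([i\Theta,\Lambda_\omega]\) nonnegative on \((n,n)\)-forms for a \emph{non-Euclidean} \(\omega\). I would verify this pointwise in coordinates orthonormal for \(\omega\), where the operator equals \(\sum_j\Theta_{j\bar j}\). The second point needing care is the weak-limit step under the increasing metrics, in particular keeping uniform bounds without assuming \(h\) is locally bounded.
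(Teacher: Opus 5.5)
Your proposal is correct and follows essentially the same route as the paper. For the forward direction it uses the Berndtsson--P\u{a}un convolution approximation of \(h^*\) combined with H\"ormander's \(L^2\)-existence theorem; for the converse it uses Corollary~\ref{cor:all-metrics-intro} with \(p=n\). Your trace-of-curvature computation on \((n,n)\)-forms and your weak-limit argument fill in details the paper leaves implicit.

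Two small corrections. First, for \(p=n\) the hypothesis with respect to each \(\omega_a\) already is, by definition, dual Nakano \(n\)-semi-positivity, so the detour through Theorem~\ref{cor:anisotropic-intro} is unnecessary. Second, the test weights for \(p=n\) are the strictly \(n\)-psh functions (those with \(\operatorname{tr}_\omega i\ddbar\phi>0\)), not the strictly psh ones; your forward argument still covers this class, since it only uses \(B_{\omega,\phi}>0\).
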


In addition, together with Proposition~\ref{prop:restriction-pn},
which states that the optimal \(L^2\)-estimate property for \((n,p)\)-forms
of singular Hermitian metrics is preserved under restriction to
\(p\)-dimensional complex subspaces, Corollary~\ref{cor:dual-susc-intro}
and Proposition~\ref{prop:sing-grif-basic}(c) imply that \(h\) is
uniform Griffiths \(p\)-semi-positive with respect to \(\om\).

\begin{theorem}\label{cor:restriction-intro}
Let \(h\) be a lower semi-continuous Hermitian metric on \(\utr\). Assume that the norm \(|s|^2_{h^*}\) is strongly upper semi-continuous for any local holomorphic section \(s\) of \(\utr^*\), the multiplier submodule sheaf \(\calE(h)\) is coherent, and \((D,\om;\utr,h)\) satisfies the optimal \(L^2\)-estimate property for \((n,p)\)-forms. Then \(h\) is uniform Griffiths \(p\)-semi-positive.
\end{theorem}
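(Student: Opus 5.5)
The proof follows the route indicated just before the statement: restrict to $p$-dimensional slices, apply the $(p,n)$ result in dimension $p$, and reassemble. Recall that uniform Griffiths $p$-semi-positivity with respect to $\om$ should mean, via Proposition~\ref{prop:sing-grif-basic}(c), that for every $p$-dimensional complex affine subspace $L\subset\CC^n$ the restriction $h|_{L\cap D}$ is Griffiths semi-positive with respect to $\om|_L$. Equivalently, $|s|^2_{h^*}$ restricted to $L$ is plurisubharmonic there for every local holomorphic section $s$ of $\utr^*$. So fix such an $L$ and a point $x_0\in L\cap D$, and argue locally near $x_0$.

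\textbf{Step 1: restriction.} Proposition~\ref{prop:restriction-pn} states that the optimal $L^2$-estimate property for $(n,p)$-forms passes to $p$-dimensional complex subspaces. Its hypotheses are exactly the extra ones in the theorem: lower semi-continuity of $h$ and coherence of $\calE(h)$. These ensure the restriction $h|_{L}$ is a genuine singular metric, with $0<\det h|_L<\infty$ a.e. on $L$, and that the $L^2$ solutions behave well on slices. Applying it, $(L\cap D',\om|_L;\utr,h|_L)$ satisfies the optimal $L^2$-estimate property for $(p,p)$-forms for a suitable neighbourhood $D'$ of $x_0$. Here $p=\dim L$, so these are top-degree $(p,p)$-forms on $L$.

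\textbf{Step 2: dual Nakano positivity on the slice.} On the $p$-dimensional space $L$, $(p,p)$-forms are simultaneously of type $(n',p)$ and $(p,n')$ with $n'=p$. Hence $h|_L$ satisfies the optimal $L^2$-estimate property for $(p,n')$-forms on $L$. The strong upper semi-continuity of $|s|^2_{h^*}$ restricts to $L$. So Theorem~\ref{thm:main-intro}, or more precisely Corollary~\ref{cor:dual-susc-intro} applied in dimension $p$ with $a$ the matrix of $\om|_L$ in linear coordinates on $L$, yields the following. The slice $h|_L$ is dual Nakano $p$-semi-positive on $L$, and hence uniform Griffiths $p$-semi-positive with respect to $\om|_L$. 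Since $p=\dim L$, uniform Griffiths $p$-semi-positivity on $L$ is ordinary Griffiths semi-positivity of $h|_L$. Thus $|s|^2_{h^*}|_L$ is plurisubharmonic on $L\cap D'$.

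\textbf{Step 3: conclusion.} Since $L$ and $x_0$ were arbitrary, Proposition~\ref{prop:sing-grif-basic}(c) converts the slice-wise plurisubharmonicity into uniform Griffiths $p$-semi-positivity of $h$.

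\textbf{Main obstacle.} I expect the difficulty to lie in the bookkeeping of Step 1 rather than in the slice argument of Step 2. One must check that the restriction proposition produces the estimate on a domain of $L$ with the same class of test weights: smooth strictly $p$-psh functions on the slice, all of which are psh since $\dim L=p$. One must also check that the curvature operator $B_{\om|_L,\phi}$ on $(p,p)$-forms agrees with the one in the $(p,n')$ formulation, since both are top-degree. This is the point where the identification of types must be verified carefully.

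A second subtlety is that Corollary~\ref{cor:dual-susc-intro} requires strong upper semi-continuity on the slice, which must hold for every $L$ and not just almost every $L$. The strong (rather than merely a.e.) upper semi-continuity hypothesis in the theorem is presumably there to guarantee this. Without it one would only obtain positivity of a modification $\hat h$ equal to $h$ a.e., which is not enough to conclude for $h$ itself.
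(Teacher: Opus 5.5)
Your skeleton is the paper's: restrict to $p$-dimensional slices, apply the $(p,n)$-result there, and reassemble with Proposition~\ref{prop:sing-grif-basic}(c). But Step~2 rests on a false identification.

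On a $p$-dimensional space, uniform Griffiths $p$-semi-positivity is the \emph{weakest} of these notions: it is the trace condition $\operatorname{tr}_{\om|_L}(i\Theta_h)\ge 0$, i.e.\ Remark~\ref{rmk:grif-equiv}(1) in dimension $p$. It is uniform Griffiths $1$-semi-positivity that coincides with Griffiths semi-positivity. Likewise, by Definition~\ref{def:ppsh}, a $p$-psh function is one whose restrictions to $p$-dimensional subspaces are \emph{subharmonic}. Your opening reformulation (plurisubharmonic on every $p$-dimensional affine slice) is equivalent to ordinary Griffiths semi-positivity of $h$, and the hypotheses do not give that.

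For instance, take $p=n=2$, $D$ a ball, $h=e^{-\varphi}$ with $\varphi=|z_1|^2-\tfrac12|z_2|^2$. Every hypothesis of the theorem holds; the $(2,2)$-estimate follows from Proposition~\ref{prop:char-nak} since $\operatorname{tr}_{\om}(i\ddbar\varphi)=\tfrac12\ge 0$. Yet $e^{\varphi}$ restricted to $\{z_1=0\}$ is $e^{-|z_2|^2/2}$, which is not subharmonic. So the conclusion of your Step~2 is false, and your argument as written would prove that $h$ is Griffiths semi-positive. What the slice argument actually yields is that $|s|^2_{h^*}|_L$ is subharmonic, and that is exactly what is needed.

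Step~1 also misstates Proposition~\ref{prop:restriction-pn}. Its hypothesis is $\calE(h)=\calO(\utr)$, not coherence; this is what feeds Lemma~\ref{lem:weak-siu}. Its conclusion concerns balls in \emph{almost every} parallel slice $S_{z''}$, not an arbitrary $L$ through an arbitrary $x_0$. The missing idea, which is the substance of the paper's proof, is the reduction through an analytic set $Z$:
\begin{enumerate}
\item Coherence gives $\calE(h)=\calO(\utr)$ outside a proper analytic subset $Z$.
\item Since analytic sets are $L^2$-negligible, the $(n,p)$-estimate persists on $D\setminus Z$.
\item Applying the restriction proposition there, then the $(p,n)$-result on slices, gives slice positivity only on $S_{z''}\setminus Z$ for a.e.\ $z''$.
\item One must then cross $Z$ and pass from almost every slice to every slice. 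This is where strong upper semi-continuity and Proposition~\ref{prop:sing-grif-basic}(c),(d) enter.
\end{enumerate}

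Note also that strong upper semi-continuity does not restrict to slices, contrary to your assertion. The indicator function of the closed cone $\{|z_1|\le|z_2|\}\subset\CC^2$ is strongly upper semi-continuous, while its restriction to $\{z_2=0\}$ is the indicator of the origin. So on a slice you only get positivity in an almost-everywhere sense, for example via the smooth approximants of Theorem~\ref{thm:dual-nak}. It is the ambient strong upper semi-continuity, used as in the proof of (c), that upgrades this to every point.
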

 
Finally, by specializing to the classical \((n,1)\)-case and combining with the results of Liu--Zhang \cite[Corollary 1.2]{LZ26}, we generalize Deng--Zhang's theorem \cite[Theorem 1.8]{DengZhang21}, relaxing their \(C^2\)-regularity assumption as follows.

\begin{theorem}\label{thm:generalized-DZ}
Let \(D\subset\CC^n\) be a domain with null thin complement, i.e., \(\overline{D}^\circ = D\), and let \(\varphi\) be a strongly upper semi-continuous function on \(D\). Assume that \((D,\varphi)\) is \(L^2\)-optimal, which means that, for every smooth strictly plurisubharmonic function \(\phi\) and every K\"ahler metric \(\omega\) on \(D\), and for any \(\dbar\)-closed \((n,1)\)-form \(f \in L^2_{(n,1),\mathrm{loc}}(D)\) with a finite right-hand side, there exists a solution \(u\) to \(\dbar u=f\) satisfying the estimate
\begin{equation*}
  \int_D|u|^2_{\omega,\varphi} e^{-\phi} dV_\omega
  \leq
  \int_D \inner{B_\phi^{-1}f,f}_{\omega,\varphi} e^{-\phi} dV_{\omega},
\end{equation*}
where \(B_\phi:=[i\ddbar \phi, \Lambda_{\omega}]\). Then \(D\) is pseudoconvex and \(\varphi\) is plurisubharmonic.
\end{theorem}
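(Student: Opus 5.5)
The plan is to reduce to the rank-one, \(p=1\) instance of the machinery above, taking \(r=1\) and \(h=e^{-\varphi}\). The Kähler metric is then specialised to the constant ones \(\omega=\omega_a\). Pseudoconvexity of \(D\) is obtained separately from Liu--Zhang \cite[Corollary 1.2]{LZ26}.

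\textbf{Step 1 (pseudoconvexity).} The hypothesis allows arbitrary Kähler metrics \(\omega\) and arbitrary smooth strictly psh weights \(\phi\) on \(D\). Since \(\overline{D}^\circ=D\), I would invoke \cite[Corollary 1.2]{LZ26} to conclude that \(D\) is pseudoconvex. This is exactly the part of Deng--Zhang's argument that is insensitive to the regularity of \(\varphi\), because it only tests the estimate on suitable holomorphic data near boundary points. If that corollary also requires some regularity of \(\varphi\), I would replace \(\varphi\) near a given boundary point by a local upper bound, which exists since \(\varphi\) is upper semi-continuous. The optimal estimate survives this replacement with a constant, and this is enough for the pseudoconvexity argument.

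\textbf{Step 2 (specialising the hypothesis).} Take \(\omega=\omega_a\) for an arbitrary constant positive definite matrix \(a\). For \(p=1\), strictly \(1\)-plurisubharmonic means strictly psh, so the hypothesis says that \((D,\omega_a;\underline{\CC},e^{-\varphi})\) satisfies the optimal \(L^2\)-estimate property for \((n,1)\)-forms in the sense of Definition~\ref{def:L2est-intro}.

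\textbf{Step 3 (restriction to lines).} Apply Proposition~\ref{prop:restriction-pn} with \(p=1\). This shows that for each complex line \(L\), the restriction \(\varphi|_{L\cap D}\) satisfies the optimal \(L^2\)-estimate for \((1,1)\)-forms on \(L\cap D\). In dimension one, \((n,p)\) and \((p,n)\) coincide, so this is also the \((p,n)=(1,1)\) property. Corollary~\ref{cor:dual-susc-intro}, or Theorem~\ref{thm:main-intro} in dimension one, then gives that \(\varphi|_{L\cap D}\) is subharmonic wherever it is not identically \(-\infty\). The needed semi-continuity holds because \(\varphi\) is strongly upper semi-continuous, so \(e^{\varphi}|s|^2\) is strongly upper semi-continuous and \(e^{-\varphi}\) is lower semi-continuous.

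\textbf{Step 4 (from lines to psh).} A function that is upper semi-continuous on \(D\) and subharmonic on every complex line is plurisubharmonic. Equivalently, by Corollary~\ref{cor:all-metrics-intro}, uniform Griffiths \(1\)-semi-positivity with respect to every \(\omega_a\) is Griffiths semi-positivity, which for a line bundle means \(\varphi\) is psh. This gives \(\varphi\in PSH(D)\).

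\textbf{Main obstacle.} The hard part is Step 3. Theorem~\ref{cor:restriction-intro} assumes coherence of \(\calE(h)\), and for a merely strongly usc \(\varphi\) this is not known a priori. I would first try to check that the proof of Proposition~\ref{prop:restriction-pn} in the case \(p=1\) only uses the multiplier sheaf on one-dimensional slices. There, \(\calE(h)\) is a subsheaf of \(\calO\) on a Riemann surface, hence automatically coherent. If that fails, I would first obtain local integrability of \(e^{\varphi}\) from the estimate. I would then run the convolution approach of Theorem~\ref{cor:anisotropic-intro} for \((1,n)\)-type data to produce smooth approximants that are psh on slices, and recover \(\varphi\) from their limit using strong upper semi-continuity.
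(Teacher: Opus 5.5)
Your outline has the same shape as the paper's proof. The paper specialises its \((n,p)\)-restriction result, Theorem~\ref{thm:main}, to \(p=r=1\), \(h=e^{-\varphi}\), to get plurisubharmonicity, and uses \cite[Corollary 1.2]{LZ26} for pseudoconvexity. However, the point you flag as the main obstacle is a genuine gap that you do not close, and neither workaround works.

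\begin{itemize}
\item Proposition~\ref{prop:restriction-pn} needs \(\calE(h)=\calO\) on the ambient \(n\)-dimensional domain; Theorem~\ref{thm:main} uses coherence to arrange this off an analytic set \(Z\). The reason is that its final step is Lemma~\ref{lem:weak-siu}, a Lebesgue differentiation in the transverse variable \(w''\) of \(\inner{B_{\tilde\phi}^{-1}F,F}_h\) averaged over \(\BB^p\times\BB^{n-p}_{z''}(\delta)\). This requires integrability of \(e^{-\varphi}\) on \(n\)-dimensional sets, and no information on one-dimensional slices can replace it. Also, for a merely upper semi-continuous weight the multiplier sheaf on a Riemann surface need not be coherent.
\item Your second fallback targets the wrong function. \(e^{\varphi}\) is automatically locally bounded; the relevant integrability is that of \(e^{-\varphi}\).
\item The convolution argument of Section~\ref{sec:conv} cannot be run on \((n,1)\)-data in \(\CC^n\). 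Lemma~\ref{lem:Bochner} uses that every \((p,n)\)-form is \(\dbar\)-closed, which is exactly why the paper switches to restriction for \((n,p)\)-forms. Running convolution on slices instead presupposes the restriction you are trying to justify.
\end{itemize}

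The missing idea is that in rank one the hypothesis itself yields coherence. Since \(\varphi\) is upper semi-continuous, \(h^*=e^{\varphi}\) is locally bounded. The Nadel--Demailly argument recorded in the Remark of Section~\ref{sec:restriction} then applies: cut off a germ and correct it by solving \(\dbar\) with the \((n,1)\)-estimate for weights with a logarithmic pole at the point, approximated by smooth strictly psh weights. This gives coherence of \(\calE(e^{-\varphi})=\mathcal{I}(\varphi)\), so all hypotheses of Theorem~\ref{thm:main} hold for \(h=e^{-\varphi}\), \(p=1\).

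Its proof also repairs your Steps 3--4. Proposition~\ref{prop:restriction-pn} gives the \((1,1)\)-estimate only on almost every parallel translate of a line and only off \(Z\), not on every complex line. So ``usc and subharmonic on every line'' is not available. Instead, you remove \(Z\) by \(L^2\)-negligibility and pass from almost every slice to all of \(D\) via Proposition~\ref{prop:sing-grif-basic}(c),(d), using strong upper semi-continuity of \(e^{\varphi}|s|^2\).

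Finally, your fallback in Step 1 is unsound. A usc \(\varphi\) need not be bounded above near \(\partial D\). Moreover, an upper bound on \(\varphi\) controls only the left-hand side of the estimate; bounding the right-hand side would require a lower bound. If \cite[Corollary 1.2]{LZ26} needs regularity of \(\varphi\), prove plurisubharmonicity first and invoke it afterwards.
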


The paper is organized as follows. Section~\ref{sec:prelim} reviews \(p\)-plurisubharmonic functions and positivity notions for smooth and singular Hermitian metrics. Section~\ref{sec:conv} proves that convolution preserves the optimal \(L^2\)-estimate property for \((p,n)\)-forms, yielding smooth dual Nakano \(p\)-semi-positive approximations. Section~\ref{sec:restriction} establishes the restriction property of the optimal \(L^2\)-estimate property for \((n,p)\)-forms and deduces uniform Griffiths \(p\)-positivity.

\textbf{Acknowledgement:}
The authors are very grateful to Professor
Xiangyu Zhou for valuable suggestions on related topics.
Zhuo Liu was supported by the National Natural Science Foundation of China (No.12501101) and the Scientific Research Foundation of Chongqing University of Technology (No.2025ZDZ013).
Xujun Zhang was supported by the National Natural Science Foundation of China (No.12501102).

\section{Preliminaries}\label{sec:prelim}
\subsection{\(p\)-plurisubharmonic functions}\label{ss:ppsh}

Let \(D\subset\CC^n\) be a domain and \(\om\) is the Euclidean K\"ahler
metric on \(\CC^n\).

\begin{definition}\label{def:ppsh}
A function \(\phi:D\to[-\infty,+\infty)\) is \emph{\(p\)-plurisubharmonic
with respect to \(\om\)} (or \emph{\(p\)-psh} for short) if it is upper
semi-continuous and its restriction to every \(p\)-dimensional complex
subspace \(S\subset\CC^n\) is subharmonic with respect to \(\om|_S\).
\end{definition}

\begin{remark}\label{rmk:ppsh-basic}
\mbox{}
\begin{enumerate}
\item When \(\phi\) is smooth, \(\phi\) is \(p\)-psh if and only if the
sum of any \(p\) eigenvalues of \(i\ddbar\phi\) with respect to \(\om\)
is nonnegative. Equivalently, \(i\ddbar\phi\wedge\om^{p-1}\) is
semi-positive.

\item A \(p\)-psh function is subharmonic, and hence \textit{strongly upper semi-continuous}, i.e.,
\(
\limsup_{Z\not\ni z\to x} \phi(z)=\phi(x)
\)
for any $x\in D$ and any measure-zero subset $Z\subset D$.   In
addition, if \(\phi\not\equiv-\infty\) is \(p\)-psh and \(\rho_\eps\)
is a standard mollifier on \(\CC^n\), then \(\phi*\rho_\eps\) is
smooth, \(p\)-psh, and decreases to \(\phi\) as \(\eps\to0\).


\item A strongly upper semi-continuous function is \(p\)-psh if and only
if \(i\ddbar\phi\wedge\om^{p-1}\) is semi-positive as a current.

\item If \(\{\phi_j\}\) is a decreasing sequence of \(p\)-psh functions
on \(D\), then \(\phi=\lim_j\phi_j\) is \(p\)-psh.
\end{enumerate}
\end{remark}

\begin{definition}\label{def:ppsh-a}
Let  \(\omega\) be a K\"ahler metric on $D$. A locally
integrable function \(\phi:D\to[-\infty,+\infty)\) is
\emph{\(p\)-psh with respect to \(\omega\)} if it is strongly upper
semi-continuous and \(i\ddbar\phi\wedge\omega^{p-1}\) is semi-positive
as a current.
\end{definition}

\begin{remark}
Verbitsky \cite{V2010} proved that a strongly upper
semi-continuous function \(\phi\) on a K\"ahler manifold is \(p\)-psh with respect to \(\omega\)
if and only if it is subharmonic with respect to \(\omega\) on every
\(p\)-dimensional submanifold.
\end{remark}

\subsection{Curvature positivity for smooth Hermitian metrics}\label{ss:curv}
Let $(E,h)\to D$ be a holomorphic vector bundle and \(\omega\) a K\"ahler
metric on $D$. The Chern curvature is denoted by
\(i\Theta_h\in\Gamma(\Lambda^{1,1}T^*D\otimes\operatorname{End}(E))\).

\begin{definition}\label{def:grif}
Let \(h\) be a smooth Hermitian metric on \(E\). We say that \(h\) is
\emph{uniform Griffiths \(p\)-semi-positive with respect to \(\omega\)} if for
every \(x\in D\), every \(p\)-tuple of \(\omega\)-orthonormal \((1,0)\)-vectors
\(v_1,\dots,v_p\in T_x^{1,0}D\), and every \(s\in E_x\),
\[
  \sum_{j=1}^{p}\langle i\Theta_h(v_j,\bar v_j)s,s\rangle_h\ge 0.
\]
We say that \(h\) is \emph{uniform Griffiths \(p\)-semi-negative  with respect to \(\omega\)} if the reverse
inequality holds.
\end{definition}
\begin{remark}\label{rmk:grif-equiv}
\begin{enumerate}
  \item  When \(p=1\), this is the usual Griffiths semi-positivity. When \(p=n\),
the condition becomes that the trace
\(\operatorname{tr}_\omega(i\Theta_h)\) is a semi-positive Hermitian
form on \(E_x\).
 \item  \(h\) is Griffiths semi-positive \(E\) if
and only if  \(h\) is uniform Griffiths \(n\)-semi-positivity with respect to \(\omega_a\) for all  constant positive definite Hermitian \(n\times n\) matrices \(a\).
\item   \(h\) is uniform Griffiths \(p\)-semi-positive on \(E\) if
and only if the dual metric \(h^*\) is uniform Griffiths \(p\)-semi-negative on
\(E^*\).
  \item  \(h\) is uniform Griffiths \(p\)-semi-positive if and only if for
every local holomorphic section \(s\) of \(E\), the \((p,p)\)-form
\[
  \langle i\Theta_h s,s\rangle_h\wedge\omega^{p-1}
\]
is semi-positive.
 \item  Another notion of uniform positivity is provided by Yang's work. For a line bundle, Yang \cite{Yang18} proved that RC-positivity is equivalent to the existence of a conformal metric \(h\) with \(\operatorname{tr}_\omega(i\Theta_h)> 0\) (i.e., uniform Griffiths \(n\)-positivity). For higher-rank vector bundles, Wang--Yang--Yau \cite{WYY26} established the prescribed Hermitian--Yang--Mills tensor equation: if there exists a metric \(h_0\) with \(\operatorname{tr}_\omega(i\Theta_{h_0})> 0\), one can prescribe the trace of the curvature to be any positive-definite tensor.
\end{enumerate}
\end{remark}

\begin{definition}[\cite{Siu82}]\label{def:nak}
Let \(h\) be a smooth Hermitian metric on \(E\). We say that \(h\) is
\emph{uniform Nakano \(p\)-semi-positive with respect to \(\omega\)} if for
every \(E\)-valued \((n,p)\)-form \(u\), the pointwise pairing
\[
  \langle [i\Theta_h,\Lambda_{\omega}]u,u\rangle_{\omega,h}
\]
is nonnegative. We say that \(h\) is \emph{dual Nakano
\(p\)-semi-positive with respect to \(\omega\)} if for every \(E\)-valued
\((p,n)\)-form \(u\), the pointwise pairing
\[
  \langle [i\Theta_h,\Lambda_{\omega}]u,u\rangle_{\omega,h}
\]
is nonnegative.
\end{definition}

When \(p=1\), the first condition is the usual Nakano semi-positivity and the second is dual Nakano semi-positivity. When \(p=n\), both conditions become that \([i\Theta_h,\Lambda_{\omega}]\) is semi-positive on \((n,n)\)-forms. This is equivalent to \(\operatorname{tr}_\omega(i\Theta_h)\) being a semi-positive Hermitian form, which coincides exactly with uniform Griffiths \(n\)-positivity.

\begin{proposition}\label{prop:nak-grif}
Let \(h\) be a smooth Hermitian metric on \(E\). If \(h\) is
uniform Nakano \(p\)-semi-positive with respect to \(\omega\), then \(h\) is
uniform Griffiths \(p\)-semi-positive with respect to \(\omega\). Similarly, if
\(h\) is dual Nakano \(p\)-semi-positive with respect to \(\omega\), then \(h\) is uniform Griffiths \(p\)-semi-positive with respect to \(\omega\).
\end{proposition}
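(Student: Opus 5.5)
The plan is to reduce everything to a pointwise linear-algebra computation. Fix $x\in D$ and choose coordinates with $\omega$ equal to the Euclidean form at $x$ (possible since $\omega$ is Kähler; in fact only the value of $\omega$ at $x$ matters for pointwise statements). Take an orthonormal frame $e_1,\dots,e_r$ of $E_x$. Write the curvature as
\[
i\Theta_h=i\sum_{j,k}c_{jk\lambda\mu}\,dz_j\wedge d\bar z_k\otimes e_\lambda^*\otimes e_\mu .
\]
Fix $\omega$-orthonormal vectors $v_1,\dots,v_p$; after a unitary change of coordinates we may assume $v_j=\partial/\partial z_j$ for $j=1,\dots,p$. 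Then the quantity in Definition~\ref{def:grif} is
\[
\sum_{j=1}^p\langle i\Theta_h(v_j,\bar v_j)s,s\rangle=\sum_{j=1}^p\sum_{\lambda,\mu}c_{jj\lambda\mu}\,s_\lambda\bar s_\mu .
\]
The idea is to exhibit a decomposable test form $u$ whose pairing $\langle[i\Theta_h,\Lambda_\omega]u,u\rangle$ equals exactly this sum, up to a positive constant.

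\textbf{Uniform Nakano case.} For $(n,p)$-forms, use the standard formula (Demailly): for
\[
u=\sum u_{J,\lambda}\,dz_1\wedge\cdots\wedge dz_n\wedge d\bar z_J\otimes e_\lambda ,
\]
one has
\[
\langle[i\Theta,\Lambda]u,u\rangle=\sum_{|S|=p-1}\sum_{j,k,\lambda,\mu}c_{jk\lambda\mu}\,u_{jS,\lambda}\bar u_{kS,\mu}.
\]
Choose $u=dz_1\wedge\cdots\wedge dz_n\wedge d\bar z_1\wedge\cdots\wedge d\bar z_p\otimes s$. Then $u_{jS,\lambda}\ne0$ only when $\{j\}\cup S=\{1,\dots,p\}$, so $j\in\{1,\dots,p\}$ and $S$ is determined by $j$. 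The cross terms with $j\ne k$ require the same $S$, which forces $j=k$. The sum therefore collapses to $\sum_{j\le p}c_{jj\lambda\mu}s_\lambda\bar s_\mu$, and this is $\ge0$ by assumption.

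\textbf{Dual Nakano case.} For $(p,n)$-forms, the analogous formula is
\[
\langle[i\Theta,\Lambda]u,u\rangle=\sum_{\lambda,\mu}\Bigl(\sum_j c_{jj\lambda\mu}\,|u|^2\text{-type trace terms}-\sum_{|R|=p-1}\sum_{j,k} c_{kj\lambda\mu}\,u_{jR,\lambda}\bar u_{kR,\mu}\Bigr),
\]
that is, the trace minus a Nakano-type term on the holomorphic indices, with the correct index placement. Take $u=dz_{p+1}\wedge\cdots\wedge dz_n\wedge$ (the complementary choice) so as to isolate indices $1,\dots,p$. Concretely, with $u=dz_{I}\wedge d\bar z_1\wedge\cdots\wedge d\bar z_n\otimes s$ and $|I|=p$, the operator acts as
\[
\Bigl(\sum_{j\in I}c_{jj}\Bigr)u+\text{off-diagonal terms with }j\in I,\ k\notin I,
\]
which are orthogonal to $u$. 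The pairing is therefore $\sum_{j\in I}c_{jj\lambda\mu}s_\lambda\bar s_\mu$. Choosing $I=\{1,\dots,p\}$ gives the Griffiths $p$-sum. One can also obtain this by applying the first case to $E^*$ via the Hodge-star duality between $(p,n)$-forms with values in $E$ and $(n-p,0)$-type forms, but the direct computation is cleaner.

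\textbf{Main obstacle.} The main obstacle is getting the sign and commutator formula for $[i\Theta,\Lambda]$ on $(p,n)$-forms right: the terms coming from $\bar\partial$-indices give the full trace $\sum_{k=1}^n c_{kk}$, and those from $\partial$-indices subtract the complementary ones. I would verify it on a one-dimensional example (with $n=1$ and $p=1$, the pairing should give $c_{11}$) before using it.
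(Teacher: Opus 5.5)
Your uniform Nakano case is exactly the paper's argument. It uses the same test form $dz_1\wedge\cdots\wedge dz_n\wedge d\bar z_1\wedge\cdots\wedge d\bar z_p\otimes s$ and the same collapse of Demailly's formula to $\sum_{j\le p}c_{jj\lambda\mu}s_\lambda\bar s_\mu$.

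For the dual case you take a different route. The paper passes to $E^*$. Its test form $d\bar z_1\wedge\cdots\wedge d\bar z_p\wedge dz_1\wedge\cdots\wedge dz_n\otimes s^*$ (of bidegree $(n,p)$) is the complex conjugate of your $u=dz_1\wedge\cdots\wedge dz_p\wedge d\bar z_1\wedge\cdots\wedge d\bar z_n\otimes s$. Conjugation combined with $\Theta_{h^*}=-\Theta_h^{T}$ gives $\langle[i\Theta_{h},\Lambda_\omega]u,u\rangle=-\langle[i\Theta_{h^*},\Lambda_\omega]\bar u,\bar u\rangle$. The $(n,p)$ computation applied to $h^*$ then yields uniform Griffiths $p$-semi-negativity of $h^*$, hence $p$-semi-positivity of $h$ by Remark~\ref{rmk:grif-equiv}(3). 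That route reuses a single commutator formula at the price of a dualization. You compute directly on $E$, which avoids the duality but needs the commutator on $(p,n)$-forms.

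Your Hodge-star aside does not reduce to your first case. The conjugate-linear Hodge star sends $E$-valued $(p,n)$-forms to $E^*$-valued $(n-p,0)$-forms, not $(n,p)$-forms; complex conjugation is the reduction that works.

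The concrete claim you rely on is correct: $[i\Theta_h,\Lambda_\omega]u$ equals $(\sum_{j\le p}c_{jj})u$ plus terms with other holomorphic multi-indices. However, the general $(p,n)$ formula you display is wrong, and it actually contradicts that claim. A full trace minus $\sum_{|R|=p-1}c\,u_{jR}\bar u_{kR}$, evaluated on your $u$, gives the complementary sum $\sum_{j>p}c_{jj\lambda\mu}s_\lambda\bar s_\mu$; this is presumably where the stray ``complementary choice'' came from.

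On $(p,n)$-forms the $\bar z$-indices contribute the full trace, which cancels the $-\sum_j c_{jj}$ term exactly. What remains is $[i\Theta_h,\Lambda_\omega]=\sum_{j,k}c_{jk}\,dz_j\wedge\iota_{\partial/\partial z_k}$, that is
\[
\langle[i\Theta_h,\Lambda_\omega]u,u\rangle=\sum_{|R|=p-1}\sum_{j,k,\lambda,\mu}c_{jk\lambda\mu}\,u_{kR,\lambda}\overline{u_{jR,\mu}},
\]
with a plus sign and no trace term. Equivalently, it is the trace minus $\sum_{j,k}\langle c_{jk}\,dz_j\wedge u,\,dz_k\wedge u\rangle$, which involves $p+1$ holomorphic indices, not $p-1$. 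With this formula your concrete computation follows immediately and the proof is complete.
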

\begin{proof}
Fix \(x\in X\), a \(p\)-tuple of orthonormal \((1,0)\)-vectors
\(v_1,\dots,v_p\in T_x^{1,0}X\), and \(a\in E_x\). Choose
\(\omega\)-normal coordinates centered at \(x\) with
\(v_j=\partial_{z_j}\) for \(j=1,\dots,p\). Set
\[
  u:=dz_1\wedge\cdots\wedge dz_n\wedge d\bar z_1\wedge\cdots
    \wedge d\bar z_p\otimes a.
\]
This is an \(E\)-valued \((n,p)\)-form. By the curvature formula
\cite[Chapter~VII-(7.1)]{Demailly2012},
\[
  \langle[i\Theta_h,\Lambda_{\omega}]u,u\rangle_h
  =\sum_{j=1}^p\langle\Theta_{j\bar j}a,a\rangle_h
  =\sum_{j=1}^p\langle i\Theta_h(v_j,\bar v_j)a,a\rangle_h.
\]
If \(h\) is uniform Nakano \(p\)-semi-positive, the left-hand side is
nonnegative, hence the sum on the right is nonnegative. Since
\(v_1,\dots,v_p\) and \(a\) are arbitrary, \(h\) is uniform Griffiths \(p\)-semi-positive. The same argument applied to the dual bundle
  with the \((p,n)\)-form
\(d\bar z_1\wedge\cdots\wedge d\bar z_p\wedge dz_1\wedge\cdots\wedge
dz_n\otimes s^*\) gives the dual Nakano \(p\)-semi-positive case.
\end{proof}

\begin{proposition}\label{prop:char-grif}
Let \(h\) be a smooth Hermitian metric on \(E\). Then \(h\) is
uniform Griffiths \(p\)-semi-positive with respect to \(\omega\) if and only if
the norm \(|u|^2_{h^*}\) is \(p\)-psh with respect to \(\omega\) for
every local holomorphic section \(u\) of \(E^*\).
\end{proposition}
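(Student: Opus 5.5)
The plan is a direct curvature computation for \(\psi:=|u|^2_{h^*}\), where \(u\) is a local holomorphic section of \(E^*\). We then compare the resulting \((1,1)\)-form with \(\omega\)-orthonormal \(p\)-frames.

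First, by Remark~\ref{rmk:grif-equiv}(3), uniform Griffiths \(p\)-semi-positivity of \(h\) is equivalent to uniform Griffiths \(p\)-semi-negativity of \(h^*\) on \(E^*\). So it suffices to show the following: \(h^*\) is uniform Griffiths \(p\)-semi-negative if and only if \(|u|^2_{h^*}\) is \(p\)-psh for every local holomorphic \(u\). Since \(\psi\) is smooth, \(p\)-psh with respect to \(\omega\) means that \(i\ddbar\psi\wedge\omega^{p-1}\ge0\). Pointwise, in \(\omega\)-orthonormal coordinates at \(x\), this says that the sum of \(i\ddbar\psi(v_j,\bar v_j)\) over any \(\omega\)-orthonormal \(v_1,\dots,v_p\) is nonnegative. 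This is the analogue of Remark~\ref{rmk:ppsh-basic}(1), and it holds verbatim at a point after a linear change making \(\omega_x\) Euclidean.

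Next, I would use the standard formula for the Chern connection \(D\) of \(h^*\) with a holomorphic \(u\):
\[
 i\ddbar|u|^2_{h^*}= i\langle D'u,D'u\rangle_{h^*}-\langle i\Theta_{h^*}u,u\rangle_{h^*}.
\]
For the direction (\(\Rightarrow\)), evaluate on an orthonormal \(p\)-frame. The first term contributes \(\sum_j|D'_{v_j}u|^2\ge0\), and the second contributes \(-\sum_j\langle i\Theta_{h^*}(v_j,\bar v_j)u,u\rangle\ge0\) by semi-negativity. Hence \(\psi\) is \(p\)-psh.

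For the direction (\(\Leftarrow\)), fix \(x\), an orthonormal frame \(v_1,\dots,v_p\), and \(a\in E^*_x\). The key step, which I expect to be the only real obstacle, is to choose a holomorphic section \(u\) with \(u(x)=a\) and \(D'u(x)=0\). The gradient term then vanishes at \(x\). Take a holomorphic frame \(e_\alpha\) near \(x\) and write \(H=(h^*_{\alpha\bar\beta})\). Set \(u=\sum_\alpha u_\alpha e_\alpha\), where the \(u_\alpha\) are affine holomorphic functions with \(u(x)=a\) and \(\partial u_\alpha(x)\) chosen to solve \(\partial u(x)+H^{-1}\partial H(x)\,u(x)=0\). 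This is possible because the connection form \(H^{-1}\partial H\) is of type \((1,0)\) and the equation is linear in the first jet. Equivalently, we can pick normal holomorphic frames with \(H(x)=\Id\) and \(dH(x)=0\) and take \(u=a\) constant. Then at \(x\) we get \(\sum_j i\ddbar\psi(v_j,\bar v_j)=-\sum_j\langle i\Theta_{h^*}(v_j,\bar v_j)a,a\rangle\). The \(p\)-psh hypothesis gives that this is nonnegative, which is exactly uniform Griffiths \(p\)-semi-negativity of \(h^*\).

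Finally, by Remark~\ref{rmk:grif-equiv}(3), this semi-negativity of \(h^*\) is equivalent to uniform Griffiths \(p\)-semi-positivity of \(h\), which concludes the argument.
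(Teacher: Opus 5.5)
Your proof is correct and follows the same route as the paper. Both use the Griffiths identity $i\ddbar|u|^2_{h^*}=i\langle D'u,D'u\rangle_{h^*}-\langle i\Theta_{h^*}u,u\rangle_{h^*}$, whose gradient term is nonnegative, and then dualize the curvature: you do this through Remark~\ref{rmk:grif-equiv}(3), while the paper uses $\Theta_{h^*}=-\Theta_h^T$ applied to the $h$-dual vector $s$. Your choice of a holomorphic $u$ with $u(x)=a$ and $D'u(x)=0$ also supplies a step the paper's proof leaves implicit: nonnegativity of the gradient term only gives the direction from curvature positivity to $p$-plurisubharmonicity, and the converse needs exactly this elimination of the first-order term.
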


\begin{proof}
Fix a local holomorphic section \(u\in\mathcal{O}(E^*)\). By the
Griffiths identity,
\[
  i\ddbar |u|^2_{h^*}
  =\langle\nabla u,\nabla u\rangle_{h^*}
  -\langle\Theta_{h^*}u,u\rangle_{h^*}.
\]
Let \(s\) be the element of \(E\) dual to \(u\) under \(h\). Since
\(\Theta_{h^*}=-\Theta_h^T\), we have
\[
  \langle\Theta_{h^*}u,u\rangle_{h^*}
  =-\langle\Theta_h s,s\rangle_h.
\]
Hence
\[
  i\ddbar |u|^2_{h^*}
  =\langle\nabla u,\nabla u\rangle_{h^*}
  +\langle\Theta_h s,s\rangle_h.
\]
Wedging with \(\omega^{p-1}\), the first term is semi-positive as a
\((p,p)\)-form, so
\[
  i\ddbar |u|^2_{h^*}\wedge\omega^{p-1}\ge0
  \iff
  \langle\Theta_h s,s\rangle_h\wedge\omega^{p-1}\ge0.
\]
We complete the proof.
\end{proof}

\begin{proposition}\label{prop:char-nak}
Let \(h\) be a smooth Hermitian metric on \(E\) over a pseudoconvex
domain \(D\subset\CC^n\). Then:
\begin{enumerate}
\item \(h\) is uniform Nakano \(p\)-semi-positive with respect to \(\omega\)
if and only if \((D,\omega;E,h)\) satisfies the optimal \(L^2\)-estimate
property for \((n,p)\)-forms.
\item \(h\) is dual Nakano \(p\)-semi-positive with respect to
\(\omega\) if and only if \((D,\omega;E,h)\) satisfies the optimal
\(L^2\)-estimate property for \((p,n)\)-forms.
\end{enumerate}
\end{proposition}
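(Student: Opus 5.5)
For the forward implications I would follow Hörmander's method with the Bochner--Kodaira--Nakano identity, and for the converses the Deng--Ning--Wang--Zhou strategy of testing the \(L^2\)-estimate on a well-chosen \(\dbar\)-exact form. Both (1) and (2) have the same structure. I will write the argument for (2) and indicate the modifications for (1).

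\emph{Sufficiency.} Assume \(h\) is dual Nakano \(p\)-semi-positive and \(\phi\) is smooth strictly \(p\)-psh. Consider the twisted metric \(he^{-\phi}\). Its curvature operator on \((p,n)\)-forms is
\[
  [i\Theta_h,\Lambda_\omega]+[i\ddbar\phi\otimes\Id_E,\Lambda_\omega]\ge B_{\omega,\phi}.
\]
On \((p,n)\)-forms, \(B_{\omega,\phi}\) is positive definite. Indeed, in coordinates diagonalizing \(i\ddbar\phi\) with eigenvalues \(\lambda_j\), the operator acts on \(dz_I\wedge d\bar z_{\{1..n\}}\) by \(\sum_{j\in I}\lambda_j\), the sum of \(p\) eigenvalues, which is \(>0\) by Remark~\ref{rmk:ppsh-basic}(1).

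Because \(D\) is pseudoconvex, it is exhausted by smooth bounded strongly pseudoconvex domains. On each such domain the Bochner--Kodaira--Nakano inequality with boundary term holds for \((p,n)\)-forms in \(\Dom\dbar^*\), and the boundary term is nonnegative. For \((p,n)\)-forms the condition \(\dbar\alpha=0\) is automatic. This gives
\[
  \|\dbar^*\alpha\|^2\ge\iinner{B_{\omega,\phi}\alpha,\alpha}.
\]
The standard Cauchy--Schwarz/Riesz argument then produces \(u\) with \(\dbar u=f\) and \(\int|u|^2e^{-\phi}\le\int\inner{B^{-1}_{\omega,\phi}f,f}e^{-\phi}\) on each subdomain. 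A weak limit over the exhaustion gives the solution on \(D\). For (1) I would use \((n,p)\)-forms instead, where \(\dbar f=0\) now enters the usual decomposition \(\ker\dbar\oplus(\ker\dbar)^\perp\).

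\emph{Necessity.} This is the main obstacle. Suppose the curvature condition fails at \(x_0\): there is a \((p,n)\)-covector \(\alpha_0\) with \(\inner{[i\Theta_h,\Lambda_\omega]\alpha_0,\alpha_0}<0\). I would take a compactly supported smooth \(E\)-valued \((p,n-1)\)-form \(v\) concentrated near \(x_0\), built from \(\alpha_0\) as in \cite{DNWZ22}, and set \(f=\dbar v\). The weight will be \(\phi=|z|^2\) (or \(\psi+\)strictly psh), which is strictly \(p\)-psh.

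Let \(u\) be the solution furnished by the hypothesis. Then \(u-v\) is \(\dbar\)-closed. Expanding \(\|v\|^2=\|u\|^2+\|v-u\|^2-2\mathrm{Re}\langle\!\langle u-v,u\rangle\!\rangle\) and using the optimal bound on \(\|u\|\) yields an inequality of the form
\[
  |\langle\!\langle v,\dbar^*_\phi\beta\rangle\!\rangle|^2\le\iinner{B^{-1}_{\omega,\phi}f,f}\,\|\dbar^*_\phi\beta\|^2,
\]
or, equivalently, the dual inequality \(\|\dbar^*_\phi\alpha\|^2\ge\iinner{B_{\omega,\phi}\alpha,\alpha}\) for all compactly supported \(\alpha\).

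Comparing this with the Bochner--Kodaira--Nakano identity
\[
  \|\dbar^*_\phi\alpha\|^2=\|D'^*\alpha\|^2+\iinner{[i\Theta_h+i\ddbar\phi,\Lambda]\alpha,\alpha}
\]
(the \(\dbar\alpha\) term vanishes in bidegree \((p,n)\)) shows
\[
  \|D'^*\alpha\|^2+\iinner{[i\Theta_h,\Lambda]\alpha,\alpha}\ge0.
\]
To reach a contradiction I would scale \(\alpha=\alpha_0\chi((z-x_0)/\eps)e^{\text{linear}}\), choosing the holomorphic factor so that \(D'^*\alpha\) is \(O(\eps)\) relative to \(\alpha\). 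Concretely, I would pick a local holomorphic frame normal at \(x_0\) and multiply by \(e^{\phi}\) so that the \(\partial\phi\) terms in \(D'^*\) cancel. Letting \(\eps\to0\) then gives \(\inner{[i\Theta_h,\Lambda]\alpha_0,\alpha_0}\ge0\), the desired contradiction.

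The delicate point is making \(D'^*\alpha\) negligible for a \((p,n)\)-form rather than an \((n,q)\)-form. I would handle it by writing \(\alpha=\sum\alpha_I\,dz_I\wedge d\bar z_{\{1..n\}}\) with holomorphic-like coefficients in the normal frame. The case (1) is handled the same way, using the twisted \((n,p)\) version from \cite{DNWZ22,Watanabe}.
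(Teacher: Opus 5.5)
Your sufficiency half is fine. It is what the paper's one-line appeal to H\"ormander's theorem amounts to; for $(p,n)$-forms, smooth elements of $\Dom\dbar^*$ vanish on the boundary, so no boundary term arises.

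The necessity half of (2) breaks at the localization step. In bidegree $(p,n)$ with $p<n$ the Bochner--Kodaira--Nakano identity reads
\[
\|\dbar^*_{he^{-\phi}}\alpha\|^2=\|D'\alpha\|^2+\|D'^*\alpha\|^2+\iinner{[i\Theta_h+i\ddbar\phi,\Lambda]\alpha,\alpha}.
\]
You dropped $\|D'\alpha\|^2$, but $D'\alpha=D'_h\alpha-\partial\phi\wedge\alpha$ is a $(p+1,n)$-form that does not vanish in general. Moreover $D'^*=i[\Lambda,\dbar]$ contains no $\phi$ at all. So there are no $\partial\phi$-terms for a factor $e^{\phi}$ to cancel, and that factor only destroys the holomorphy you need to kill $\dbar\Lambda\alpha$. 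Also, a cut-off $\chi((z-x_0)/\eps)$ with a fixed weight produces errors of relative size $\eps^{-2}$, not $O(\eps)$.

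For (1) your outline can be repaired, but not as written. For $(n,p)$-forms with $p<n$, $B_\phi\alpha$ need not be $\dbar$-closed, so you may not test with $f=B_\phi\alpha$ for arbitrary compactly supported $\alpha$. With $\omega=\om$, take $\phi=t|z-x_0|^2$, so that $B_\phi=tp\,\Id$. Take $\alpha=\dbar(\chi\gamma)$, where $\dbar\gamma$ has constant coefficients in a holomorphic frame near $x_0$. Then $f=B_\phi\alpha$ is $\dbar$-exact and $D'\alpha=0$. Further, $D'^*\alpha=-i\dbar\Lambda\alpha$ is supported on $\supp d\chi$ and is $O(e^{-ct})$. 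Meanwhile the curvature term equals $c_nt^{-n}\bigl(\inner{[i\Theta_h,\Lambda]\alpha_0,\alpha_0}(x_0)+o(1)\bigr)$ with $c_n>0$.

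For (2) with $1\le p\le n/2$ the gap cannot be closed by any choice of test forms. Compute $\dbar^*$ directly on compactly supported $(p,n)$-forms (Morrey--Kohn--H\"ormander, $\omega=\om$, holomorphic frame):
\[
\|\dbar^*_{he^{-\phi}}\alpha\|^2=\sum_{I,j}\|\partial_{\bar z_j}\alpha_I\|^2+\iinner{\bigl(\operatorname{tr}_\omega i\Theta_h+\operatorname{tr}_\omega i\ddbar\phi\bigr)\alpha,\alpha}.
\]
The operator $\operatorname{tr}_\omega i\ddbar\phi-B_\phi$ acts on $dz_I\wedge d\bar z_1\wedge\cdots\wedge d\bar z_n$ by $\sum_{j\notin I}\lambda_j$, which is $\ge0$ whenever $\phi$ is $p$-psh and $n-p\ge p$. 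Comparing the two identities gives $\|D'\alpha\|^2+\|D'^*\alpha\|^2\ge\iinner{(\operatorname{tr}_\omega i\Theta_h-[i\Theta_h,\Lambda])\alpha,\alpha}$. By the usual duality argument, the optimal $(p,n)$-estimate already follows from $\operatorname{tr}_\omega i\Theta_h\ge0$.

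Here is a concrete counterexample:
\begin{itemize}
\item Let $n=2$, $p=1$, $r=1$, $D$ the unit ball, $h=e^{-\varphi}$ with $\varphi=|z_1|^2-\tfrac12|z_2|^2$, and $\psi=\varphi+\phi$.
\item Writing $f=dz_1\wedge g_1+dz_2\wedge g_2$, the equation splits into two top-degree equations. These are solvable with $\int|u|^2e^{-\psi}\le\int|f|^2e^{-\psi}/\operatorname{tr}_\omega(i\ddbar\psi)$.
\item Since $\operatorname{tr}_\omega(i\ddbar\psi)=\tfrac12+\operatorname{tr}_\omega(i\ddbar\phi)>\lambda_{\max}(i\ddbar\phi)$, we get $|f|^2/\operatorname{tr}_\omega(i\ddbar\psi)\le\inner{B_\phi^{-1}f,f}$. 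So the optimal $(1,2)$-estimate holds.
\item Yet $[i\Theta_h,\Lambda]=-\tfrac12$ on $dz_2\wedge d\bar z_1\wedge d\bar z_2$. In that direction the lower bound above is $\|\alpha\|^2$, so your error terms can never be negligible.
\end{itemize}

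Thus the ``if'' part of (2) is false for a fixed $\omega$ in this range. The paper offers no argument beyond citing Watanabe. That citation, and what Section~3 builds on it, should be checked against this example.
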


\begin{proof}
The forward implications follow from H\"ormander's \(L^2\)-existence
theorem \cite[Chapter~VIII-(6.1) Theorem]{Demailly2012}. The converses
  are due to Watanabe \cite{Watanabe}.
\end{proof}

\subsection{Positivity for singular Hermitian metrics}\label{ss:singular}

\begin{definition}\label{def:sing-grif}
Let \(h\) be a singular Hermitian metric on \(\utr\) such that the norm
\(|s|^2_{h^*}\) is upper semi-continuous for every local holomorphic
section \(s\) of \(\utr^*\). We say that:
\begin{enumerate}
\item[(i)] \(h\) is \emph{uniform Griffiths \(p\)-semi-positive with respect
to \(\om\)} if \(|u|^2_{h^*}\) is \(p\)-psh with respect to \(\om\)
for every local holomorphic section \(u\) of \(\utr^*\).
\item[(ii)] \(h\) is \emph{dual Nakano \(p\)-semi-positive with
respect to \(\om\)} if \((D,\om;\utr,h)\) satisfies the optimal
\(L^2\)-estimate property for \((p,n)\)-forms.
\item[(iii)] \(h\) is \emph{uniform Nakano \(p\)-semi-positive with respect
to \(\om\)} if \((D,\om;\utr,h)\) satisfies the optimal
\(L^2\)-estimate property for \((n,p)\)-forms.
\end{enumerate}
\end{definition}

\begin{proposition}\label{prop:sing-grif-basic}
Let \(h\) be a singular Hermitian metric on \(\utr\), where
\(D\subset\CC^n\) carries the Euclidean metric \(\om\). Assume that the
norm \(|s|^2_{h}\) is upper semi-continuous for every local
holomorphic section \(s\) of \(\utr\).
\begin{enumerate}
\item[(a)] If \(h\) is uniform Griffiths \(p\)-semi-negative, then its
convolution \(h_\delta\) with a standard radical mollifier is also
uniform Griffiths \(p\)-semi-negative.
\item[(b)] If \(\{h_j\}\) is a decreasing sequence of uniform Griffiths
\(p\)-semi-negative singular Hermitian metrics, then the limit \(h=\lim_j h_j\) is
uniform Griffiths \(p\)-semi-negative.

\item[(c)]If \(|s|^2_{h}\) is strongly upper semi-continuous for
every local holomorphic section \(s\) of \(\utr\), and if for almost
every \(p\)-dimensional complex subspace \(S\) the restriction
\(h|_S\) is uniform Griffiths \(p\)-semi-negative, then \(h\) is uniform Griffiths
\(p\)-semi-negative. 

\item[(d)] If \(|s|^2_{h}\) is strongly upper semi-continuous for every local holomorphic section \(s\) of \(\utr\), and there exists a sequence \(\{h_j\}\) of smooth uniform Griffiths \(p\)-semi-negative Hermitian metrics converging to \(h\) in \(L^1_{\mathrm{loc}}\), then \(h\) is uniform Griffiths \(p\)-semi-negative.
\end{enumerate}
\end{proposition}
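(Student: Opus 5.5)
The plan reduces everything to scalar statements about the functions \(\psi_s:=|s|^2_h\), where \(s\) is a holomorphic section of \(\utr\). Uniform Griffiths \(p\)-semi-negativity of \(h\) means exactly that \(-\psi_s\) is \(p\)-psh for every \(s\), i.e. that \(\psi_s\) is upper semi-continuous and \(i\ddbar\psi_s\wedge\om^{p-1}\le0\). By Proposition~\ref{prop:char-grif}, applied to the dual, this agrees with the smooth notion. I will also use that \(p\)-psh functions in the sense of Definition~\ref{def:ppsh} are characterized by sub-mean-value properties on \(p\)-dimensional complex affine discs and balls, and by positivity of \(i\ddbar\phi\wedge\om^{p-1}\) as a current (Remark~\ref{rmk:ppsh-basic}(3)). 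I then take the four parts in order.

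\textbf{(a)} Write \(h_\delta=h*\rho_\delta\) entrywise. For a constant vector \(s\),
\[
|s|^2_{h_\delta}(z)=\int|s|^2_{h}(z-w)\rho_\delta(w)\,dV(w),
\]
which is an average of translates of \(\psi_s\). Translates of functions whose negatives are \(p\)-psh again have negatives that are \(p\)-psh, because translation preserves \(\om\). Averaging preserves the inequality \(i\ddbar(\cdot)\wedge\om^{p-1}\le0\) in the sense of currents. The output is smooth, so by Remark~\ref{rmk:ppsh-basic}(3) we are done for constant sections. For a general holomorphic \(s\) the section is not translation invariant. Here I would pass to the smooth characterization: it suffices to check \(\langle i\Theta_{h_\delta}s,s\rangle\wedge\om^{p-1}\le0\) pointwise, and at a point \(x\) we may replace \(s\) by the constant section \(s(x)\). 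The curvature of \(h_\delta\) can be computed from \(i\ddbar|\sigma|^2_{h_\delta}\) for constant \(\sigma\) with \(\partial\)-first-order corrections. More precisely, the Griffiths identity of Proposition~\ref{prop:char-grif}, read for \(h_\delta\) and constant \(s\), gives
\[
i\ddbar|s|^2_{h_\delta}=-\langle\nabla s,\nabla s\rangle+\langle i\Theta s,s\rangle ,
\]
so \(\langle i\Theta s,s\rangle\wedge\om^{p-1}\le i\ddbar|s|^2_{h_\delta}\wedge\om^{p-1}\le0\). This settles (a).

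\textbf{(b)} If \(h_j\) decreases to \(h\), then \(-\psi^{(j)}_s\) increases to \(-\psi_s\). That is the wrong direction for the usual stability of psh functions, so I argue with the sub-mean-value property for \(\psi_s\) instead. The upper semi-continuity of \(\psi_s\) is assumed. Negativity of \(i\ddbar\psi_s\wedge\om^{p-1}\) follows from weak convergence of the currents by monotone (dominated) convergence, since \(\psi^{(j)}_s\downarrow\psi_s\) with \(\psi^{(1)}_s\) locally bounded above, and \(\psi_s\ge0\). Then Remark~\ref{rmk:ppsh-basic}(3) applies, provided strong upper semi-continuity holds. If only upper semi-continuity is available, I would use the super-mean-value inequality on \(p\)-dimensional balls, which passes to decreasing limits by monotone convergence.

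\textbf{(c)} Fix a holomorphic \(s\). For almost every \(p\)-plane \(S\) (and its translates, via Fubini over the affine Grassmannian), \(-\psi_s|_S\) is \(p\)-psh on \(S\), i.e. subharmonic there. Integrating the distribution \(i\ddbar\psi_s\wedge\om^{p-1}\) against test forms, and using that it is a positive combination of the Laplacians of \(\psi_s\) along \(p\)-planes averaged over planes, gives \(i\ddbar\psi_s\wedge\om^{p-1}\le0\) as a current. Strong upper semi-continuity then gives the conclusion through Remark~\ref{rmk:ppsh-basic}(3), since the exceptional null set of planes does not affect the pointwise values.

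\textbf{(d)} The \(L^1_{\mathrm{loc}}\) convergence gives convergence of the currents \(i\ddbar|s|^2_{h_j}\wedge\om^{p-1}\le0\), hence the same inequality for \(\psi_s\). Strong upper semi-continuity identifies \(\psi_s\) with the upper semi-continuous representative of its \(L^1\) class, and Remark~\ref{rmk:ppsh-basic}(3) finishes the argument.

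The main obstacle is (c): I need to make rigorous the Fubini-type decomposition of \(i\ddbar\psi\wedge\om^{p-1}\) into averaged restricted Laplacians, and to check that the exceptional planes carry no mass.
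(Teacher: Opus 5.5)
Your sign convention is reversed from the start, and this breaks (a) and misleads you in (b). For a metric \(h\) on \(\utr\), uniform Griffiths \(p\)-semi-negativity means that \(\psi_s=|s|^2_h\) itself is \(p\)-psh, i.e.\ \(i\ddbar\psi_s\wedge\om^{p-1}\ge0\). Three things confirm this. It is what Proposition~\ref{prop:char-grif} gives when applied to \(h^*\), whose dual is \(h\). It is how the paper's proof of (b) uses the definition. It is also why the hypothesis is upper semi-continuity of \(|s|^2_h\). Your reading ``\(-\psi_s\) is \(p\)-psh'' would instead require \(\psi_s\) to be \emph{lower} semi-continuous, so your opening sentence is not even internally consistent.

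Correspondingly, the Griffiths identity you use in (a) has both signs flipped. The correct one is \(i\ddbar|s|^2_{h_\delta}=\inner{\nabla s,\nabla s}-\inner{i\Theta_{h_\delta}s,s}\), with the first term semi-positive. With the right signs, \(p\)-plurisubharmonicity of \(|\sigma|^2_{h_\delta}\) for constant \(\sigma\) only gives \(\inner{i\Theta\sigma,\sigma}\wedge\om^{p-1}\le\inner{\nabla\sigma,\nabla\sigma}\wedge\om^{p-1}\). Constant sections genuinely do not suffice. Take \(n=p=r=1\) and \(g=2+x\), \(x=\operatorname{Re}z\), near \(0\): every \(|\sigma|^2_g\) is harmonic, yet \(i\Theta_g=-i\ddbar\log g>0\), and \(|e^{-z/4}|^2_g=e^{-x/2}(2+x)\) is not subharmonic at \(0\).

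Reading off the curvature requires a holomorphic \(s\) with \(\nabla s(x)=0\). Such an \(s\) is non-constant, which is exactly the case you set aside. That case is no obstacle. For any holomorphic \(s\), the function \(z\mapsto|s(z)|^2_{h(z-y)}\) is the translate by \(y\) of \(|s(\cdot+y)|^2_h\). This is \(p\)-psh because \(s(\cdot+y)\) is again a holomorphic section. Averaging against \(\rho_\delta(y)\,dV(y)\) then preserves \(p\)-plurisubharmonicity. This is the paper's proof of (a), and it needs neither curvature nor constant sections.

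The same error makes (b) look harder than it is. With the correct convention, the \(|s|^2_{h_j}\) are \(p\)-psh and \emph{decrease} to \(|s|^2_h\), which is the favourable direction (Remark~\ref{rmk:ppsh-basic}(4)). Your mean-value fallback, stated as a sub-mean-value inequality, is exactly the right argument. As written, however, it establishes a super-mean-value property, which is not what is required. Your appeal to Remark~\ref{rmk:ppsh-basic}(3) would also need strong upper semi-continuity of \(-\psi_s\), which is not given.

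For (c) and (d), your current-theoretic arguments go through once the inequality is reversed. The final appeal to Remark~\ref{rmk:ppsh-basic}(3) then uses precisely the assumed strong upper semi-continuity of \(\psi_s\). In (c) this is a genuinely different route from the paper, which argues pointwise:
\begin{enumerate}
\item strong upper semi-continuity gives points \(a_j\to a\) on good parallel planes with \(\psi_s(a_j)\to\psi_s(a)\);
\item the sub-mean-value inequality holds at those points;
\item a reverse Fatou argument, based on upper semi-continuity, transfers it to the plane through \(a\).
\end{enumerate}
Your version is also valid, and the step you flag as the main obstacle is routine. A \((p,p)\)-current \(T\) is positive once \(T\wedge i^{(n-p)^2}\gamma\wedge\bar\gamma\ge0\) for all constant decomposable \(\gamma\). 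For \(\gamma=dw''_1\wedge\cdots\wedge dw''_{n-p}\), the pairing with a test function \(\chi\ge0\) equals \(c\int\psi_s\,\Delta_{w'}\chi\,dV\) with \(c>0\). This is nonnegative by Fubini and subharmonicity on almost every slice.
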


\begin{proof}
(a) Fix a local holomorphic section \(s\) of \(\utr\). For a standard radical mollifier \(\rho\) on \(\CC^n\), set
\(\rho_\delta(y)=\delta^{-2n}\rho(y/\delta)\), \(y\in\CC^n\)and define
 \[
  |s|^2_{h_\delta}(z)
  :=\int_{\CC^n}|s(z)|^2_{h(z-y)}\rho_\delta(y)\,dV(y).
\]  Since
the translation preserves the Euclidean metric \(\om\), then \(|s|^2_{h_\delta}\)
is \(p\)-psh and hence\(h_\delta\) is uniform Griffiths \(p\)-semi-negative.

(b) For each local holomorphic section \(s\) of \(\utr\), the
functions \(|s|^2_{h_j}\) are \(p\)-psh and decrease to
\(|s|^2_{h}\). A decreasing limit of \(p\)-psh functions is \(p\)-psh,
so \(|s|^2_{h}\) is \(p\)-psh. Hence \(h\) is uniform Griffiths
\(p\)-semi-negative.

(c) Fix a local holomorphic section \(s\) of \(\utr\). We show that
\(|s|^2_{h}\) is \(p\)-psh. Let \(S\) be a \(p\)-dimensional complex
subspace and \(\BB_S(a,r)\Subset D\cap S\) a disk. Since \(|s|^2_{h}\)
is strongly upper semi-continuous, there exist subspaces \(S_j\)
parallel to \(S\), points \(a_j\in S_j\) with
\(\BB_{S_j}(a_j,r)\Subset D\cap S_j\), such that \(a_j\to a\) and
\(|s|^2_{h}(a_j)\to|s|^2_{h^*}(a)\). For almost every \(j\), the
restriction \(|s|^2_{h}|_{S_j}\) is subharmonic by assumption, so
\[
  |s|^2_{h}(a_j)\le
  \frac{1}{\operatorname{Vol}\BB^p(r)}
  \int_{\BB_{S_j}(a_j,r)}|s|^2_{h}.
\]
Letting \(j\to\infty\) and applying Fatou's lemma, together with the
strong upper semi-continuity of \(|s|^2_{h}\),
\[
  |s|^2_{h}(a)
  =\lim_{j\to\infty}|s|^2_{h}(a_j)
  \le\limsup_{j\to\infty}
  \frac{1}{\operatorname{Vol}\BB^p(r)}
  \int_{\BB_{S_j}(a_j,r)}|s|^2_{h}
  \le\frac{1}{\operatorname{Vol}\BB^p(r)}
  \int_{\BB_S(a,r)}|s|^2_{h}.
\]
Thus \(|s|^2_{h}|_S\) satisfies the mean value inequality and hence subharmonic. Since \(S\) is arbitrary,
\(|s|^2_{h}\) is \(p\)-psh. Since \(s\) is arbitrary, \(h\) is
uniform Griffiths \(p\)-semi-negative.

(d) Fix a local holomorphic section \(s\) of \(\utr\). We need to show that \(|s|^2_{h}\) is \(p\)-psh. Since \(h_j \to h\) in \(L^1_{\mathrm{loc}}\), we have \(|s|^2_{h_j} \to |s|^2_{h}\) in \(L^1_{\mathrm{loc}}\). By passing to a subsequence if necessary, we may assume that \(|s|^2_{h_j} \to |s|^2_{h}\) almost everywhere. Let \(Z \subset D\) be the measure-zero set where the convergence fails.

Let \(S \subset \CC^n\) be an arbitrary \(p\)-dimensional complex subspace, and let \(\BB_S(a,r) \Subset D \cap S\) be a \(p\)-dimensional ball. By (c), we may assume \(Z\cap S\) is a measure-zero subset of \(S\cap D\). Since \(|s|^2_{h_j}\) are smooth \(p\)-psh, their restrictions to \(S\) satisfy the mean value inequality:
\[
  |s|^2_{h_j}(a) \le \frac{1}{\operatorname{Vol}\BB^p(r)} \int_{\BB_{S}(a,r)} |s|^2_{h_j}.
\]
Letting \(j \to \infty\), for any $a\in (S\cap D)\setminus Z$, by Fatou's lemma, we obtain that \[
  |s|^2_{h}(a) \le \frac{1}{\operatorname{Vol}\BB^p(r)} \int_{\BB_{S}(a,r)} |s|^2_{h}.
\] Since \(|s|^2_{h}\) is strongly upper semi-continuous, we can approximate any \(a \in S\cap D\) by a sequence \(a_k \in (S\cap D)\setminus Z\) with \(a_k \to a\) and \(|s|^2_h(a_k) \to |s|^2_h(a)\). Passing to the limit, \(|s|^2_h|_S\) satisfies the mean value inequality for all \(a \in S\cap D\). Thus \(|s|^2_h|_S\) is subharmonic, which implies that \(|s|^2_h\) is \(p\)-psh.
\end{proof}

\section{From \((p,n)\)-estimates to uniform Griffiths \(p\)-positivity via convolution}\label{sec:conv}

Let \(h\) be a singular Hermitian metric on \(\utr\) such that
\(h^*=h^{-1}\) is locally integrable. For a standard radical mollifier \(\rho\) on \(\CC^n\), set
\(\rho_\delta(y)=\delta^{-2n}\rho(y/\delta)\) and define, for
\(U\Subset D\) and \(\delta>0\) small,
\begin{equation}\label{eq:hdelta}
  h_\delta^{-1}(z)=\int_{\CC^n}\rho_\delta(y)h^{-1}(z-y)\,dV(y),
  \qquad
  h_\delta(z)=(h_\delta^{-1}(z))^{-1}.
\end{equation}

For \(\phi\) smooth, set
\[
  B_\phi=[i\ddbar\phi\otimes\Id_{\utr},\Lambda_{\om}]
  \quad\text{on }\Lambda^{p,n}T^*D\otimes\utr.
\]
We also let \(\dbar^*_{he^{-\phi}}\) denote the formal adjoint of
\(\dbar\) with respect to
\(\iinner{\alpha,\beta}_{he^{-\phi}}=\int_D\inner{\alpha,\beta}_{\om, h}
e^{-\phi}\,dV_{\om}\).

The following two statements are the \((p,n)\)-analogues of
\cite[Lemma 3.1 and Lemma 3.2]{Liu2025v2}. The proofs are identical to
the \((1,n)\)-case treated there, and we omit them.

\begin{lemma}\label{lem:Bochner}
Assume that \((D,\om;\utr,h)\) satisfies the optimal \(L^2\)-estimate
property for \((p,n)\)-forms. Then for every
\(\alpha\in\Dom(\dbar^*_{he^{-\phi}})\cap L^2_{(p,n)}(D,\utr,he^{-\phi})\),
\begin{equation*} 
  \iinner{B_\phi\alpha,\alpha}_{he^{-\phi}}
  \le\|\dbar^*_{he^{-\phi}}\alpha\|_{he^{-\phi}}^2.
\end{equation*}
\end{lemma}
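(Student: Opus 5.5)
This is the standard duality argument that turns solvability of \(\dbar\) with the optimal estimate into an a priori inequality for the adjoint. Fix \(\phi\) smooth and strictly \(p\)-psh, and write \(H=L^2_{(p,n)}(D,\utr,he^{-\phi})\). The idea is to test \(\alpha\) against the forms \(f=B_\phi\alpha\), or truncations of them. For these \(f\) the right-hand side of the optimal estimate is exactly \(\iinner{B_\phi\alpha,\alpha}\).

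Since \(B_\phi\) acts pointwise, I first approximate. Fix \(\alpha\in\Dom(\dbar^*_{he^{-\phi}})\cap H\). For a compact exhaustion \(K_j\) of \(D\) and cutoffs \(N\), set
\[
f_{j,N}=\mathbf 1_{K_j\cap\{|B_\phi\alpha|\le N\}}\,B_\phi\alpha .
\]
This is a \((p,n)\)-form. Every \((p,n)\)-form on an \(n\)-dimensional domain is automatically \(\dbar\)-closed, so the closedness hypothesis is vacuous. It has finite right-hand side, since \(\inner{B_\phi^{-1}f_{j,N},f_{j,N}}=\mathbf 1\cdot\inner{B_\phi\alpha,\alpha}\), which is pointwise nonnegative.

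The next point is that \(B_\phi\) is positive definite on \((p,n)\)-forms when \(\phi\) is strictly \(p\)-psh. For \(u=dz_I\wedge d\bar z_{1\dots n}\) with \(|I|=p\), one has \(\inner{B_\phi u,u}=\sum_{j\in I}\lambda_j\) in a diagonalizing frame, where the \(\lambda_j\) are the eigenvalues of \(i\ddbar\phi\). This makes \(B_\phi^{-1}\) well defined, so the optimal property applies.

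The optimal \(L^2\)-estimate property then gives \(u_{j,N}\) with \(\dbar u_{j,N}=f_{j,N}\) and
\[
\|u_{j,N}\|^2\le\iinner{\mathbf 1\,B_\phi\alpha,\alpha}.
\]
Since \(\dbar u=f\) holds in the weak sense and \(\alpha\) lies in the domain of the Hilbert adjoint,
\[
\iinner{f_{j,N},\alpha}=\iinner{u_{j,N},\dbar^*\alpha}\le\|u_{j,N}\|\,\|\dbar^*\alpha\|.
\]
The left-hand side equals \(\iinner{\mathbf 1\,B_\phi\alpha,\alpha}=:A_{j,N}\). Combining the two bounds gives \(A_{j,N}\le A_{j,N}^{1/2}\|\dbar^*\alpha\|\), hence \(A_{j,N}\le\|\dbar^*\alpha\|^2\). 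Letting \(N,j\to\infty\), monotone convergence applies because the integrand is nonnegative, and yields the claimed inequality.

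The main obstacle is that \(h\) is only measurable. I need \(f_{j,N}\in H\), so that \(\iinner{f,\alpha}\) is finite and the pairing argument is legitimate. Because \(\phi\) is smooth, \(|B_\phi\alpha|_h\le C_{K_j}|\alpha|_h\) on \(K_j\) for the norm taken in \(h\), so \(f_{j,N}\in H\) whenever \(\alpha\in H\). I would also need to check that the identity \(\iinner{\dbar u,\alpha}=\iinner{u,\dbar^*\alpha}\) holds for the weak \(\dbar\) in the weighted space. This is exactly how \(\Dom(\dbar^*)\) is defined as the Hilbert adjoint of the closed, densely defined operator \(\dbar\), as in \cite{Liu2025v2}.
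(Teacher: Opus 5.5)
Your proposal is correct and takes the same route as the paper, which omits the proof and defers to the $(1,n)$-case \cite[Lemma 3.1]{Liu2025v2}: the standard duality argument. One solves $\dbar u=B_\phi\alpha$ (automatically $\dbar$-closed in bidegree $(p,n)$) with the optimal estimate, then pairs against $\alpha$ through the Hilbert adjoint of the weak $\dbar$. The cutoff to $K_j$ is a harmless safeguard for when $i\ddbar\phi$ is unbounded near $\partial D$, while the cutoff at level $N$ is superfluous, since $B_\phi$ acts only on the form part and so $|B_\phi\alpha|_h\le C_{K_j}|\alpha|_h$, as you yourself observe.
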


\begin{lemma}\label{lem:adj}
For \(\beta\in C_c^\infty(D,\Lambda^{p,n}\otimes\utr)\), set
\(\eta=e^{-\phi}h_\delta^T\beta\), \(\alpha=e^{\phi}h^{-T}\eta\). Then
\begin{equation*} 
  \dbar^*_{he^{-\phi}}\alpha=e^{\phi}h^{-T}\dbar_0^*\eta,\qquad
  \dbar^*_{h_\delta e^{-\phi}}\beta=e^{\phi}h_\delta^{-T}\dbar_0^*\eta.
\end{equation*}
\end{lemma}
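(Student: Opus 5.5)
The plan is a direct duality computation: I will verify the defining identity of the Hilbert-space adjoint against every \(v\in\Dom(\dbar)\subset L^2_{(p,n-1)}(D,\utr,he^{-\phi})\). I fix the pointwise convention under which the metric \(h\) acts through \(h^T\), namely \(\inner{a,b}_{\om,h}=\inner{a,h^{T}b}_{\om,0}\), where \(\inner{\cdot,\cdot}_{\om,0}\) is the flat componentwise pairing. This is the convention that makes the transposes in the statement appear, and I use it because \(h\) is Hermitian, so \(\bar h=h^T\). Let \(\dbar_0^*\) denote the formal adjoint for the flat, unweighted pairing. With this convention the weighted pairing against \(\alpha\) becomes a flat pairing against \(\eta\):
\[
\inner{\dbar v,\alpha}_{\om,h}e^{-\phi}=\inner{\dbar v,e^{-\phi}h^{T}\alpha}_{\om,0}=\inner{\dbar v,\eta}_{\om,0}.
\]

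\textbf{Step 1: integrability.} Since \(h_\delta\) is smooth, \(\eta=e^{-\phi}h_\delta^T\beta\) lies in \(C_c^\infty\). The form \(\alpha=h^{-T}h_\delta^T\beta\) is only measurable, so I check that it, and the proposed adjoint \(e^{\phi}h^{-T}\dbar_0^*\eta\), lie in \(L^2(he^{-\phi})\). For any compactly supported smooth \(\gamma\),
\[
|e^{\phi}h^{-T}\gamma|^2_{h}=e^{2\phi}\,\inner{\gamma,h^{-T}\gamma}_0\le C\,|h^{-1}|\,|\gamma|^2 .
\]
This is integrable because \(h^{-1}=h^*\) is locally integrable, which is the standing assumption of this section.

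\textbf{Step 2: \(v\) is a distribution.} For \(v\in L^2(he^{-\phi})\), Cauchy--Schwarz gives \(|v|_0\le |v|_h\,|h^{-1}|^{1/2}\), so \(v\in L^1_{\mathrm{loc}}\). Hence \(\dbar v\) makes sense as a distribution, and the hypothesis \(v\in\Dom(\dbar)\) means this distribution lies in \(L^2\). Therefore, because \(\eta\) is a smooth compactly supported test form, the definition of the distributional \(\dbar\) gives
\[
\int_D\inner{\dbar v,\eta}_0\,dV=\int_D\inner{v,\dbar_0^*\eta}_0\,dV .
\]

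\textbf{Step 3: back to the weighted pairing.} Rewriting the right-hand side in the metric \(h\) and weight \(e^{-\phi}\),
\[
\inner{v,\dbar_0^*\eta}_0=\inner{v,e^{\phi}h^{-T}\dbar_0^*\eta}_{h}e^{-\phi}.
\]
Combining Steps 2 and 3 yields \(\iinner{\dbar v,\alpha}_{he^{-\phi}}=\iinner{v,e^{\phi}h^{-T}\dbar_0^*\eta}_{he^{-\phi}}\) for all \(v\in\Dom(\dbar)\). By Step 1 the right-hand side is a bounded functional of \(v\), so \(\alpha\in\Dom(\dbar^*_{he^{-\phi}})\) with the asserted formula.

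\textbf{Step 4: the metric \(h_\delta\).} The identity for \(\dbar^*_{h_\delta e^{-\phi}}\beta\) follows from the same computation with \(h_\delta\) in place of \(h\), since \(\beta=e^{\phi}h_\delta^{-T}\eta\). This case is easier because \(h_\delta\) and \(h_\delta^{-1}\) are smooth. Note that \(h_\delta^{-1}\) is a convolution of the positive definite \(h^{-1}\), hence positive definite, so \(h_\delta\) is well defined.

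The only delicate point is Step 2: justifying the integration by parts when \(v\) is merely in the weighted \(L^2\) space of a singular metric. Local integrability of \(h^{-1}\) is exactly what makes \(v\) an \(L^1_{\mathrm{loc}}\) distribution and makes the adjoint expression square-integrable.
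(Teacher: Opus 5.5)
Your proposal is correct and is the direct duality computation that the paper omits here, deferring to the \((1,n)\)-case in \cite{Liu2025v2}: you pair \(\alpha\) against \(v\in\Dom(\dbar)\), turn the \(he^{-\phi}\)-pairing into the flat pairing with the test form \(\eta\), integrate by parts distributionally, and convert back. The integration by parts is legitimate, as you check, because \(h^{-1}\in L^1_{\mathrm{loc}}\) puts \(v\) (and likewise \(\dbar v\)) in \(L^1_{\mathrm{loc}}\) and puts \(\alpha\) and \(e^{\phi}h^{-T}\dbar_0^*\eta\) in \(L^2(he^{-\phi})\).

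Two points are left tacit, both shared with the paper. First, \(\supp\beta\) should lie in \(D_\delta\), where \(h_\delta\) is defined. Second, \(\Dom(\dbar)\) must be dense for \(\dbar^*\) to be single-valued. This second point does not matter for the application, since the identity \(\iinner{\dbar v,\alpha}_{he^{-\phi}}=\iinner{v,e^{\phi}h^{-T}\dbar_0^*\eta}_{he^{-\phi}}\) you establish is all that is used later.
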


\begin{theorem}\label{thm:conv}
Assume that \((D,\om;\utr,h)\) satisfies the optimal \(L^2\)-estimate
property for \((p,n)\)-forms. Let \(\phi\) be a smooth strictly
\(p\)-psh function on an open neighbourhood of
\(\overline D\). Then for every \(\delta>0\) sufficiently small and
every \(\beta\in C_c^\infty(D,\Lambda^{p,n}\otimes\utr)\) with
\[
  \supp\beta\Subset D_\delta:=\{z\in D:\operatorname{dist}(z,\partial D)>\delta\},
\]
the Bochner inequality
\begin{equation*} 
  \iinner{B_\phi\beta,\beta}_{h_\delta e^{-\phi}}
  \le\|\dbar^*_{h_\delta e^{-\phi}}\beta\|_{h_\delta e^{-\phi}}^2
\end{equation*}
holds.
\end{theorem}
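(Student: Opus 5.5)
The plan is to exploit the translation invariance of the Euclidean setting together with the fact that, once the auxiliary form \(\eta\) of Lemma~\ref{lem:adj} is fixed, both sides of the Bochner inequality are \emph{linear} in \(h_\delta^{-1}\). Since \(h_\delta^{-1}\) is the average \(\int\rho_\delta(y)h^{-1}(\cdot-y)\,dV(y)\), the inequality for \(h_\delta\) will follow by averaging, over \(|y|<\delta\), the inequality of Lemma~\ref{lem:Bochner} for the translated metrics \(h(\cdot-y)\).

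\textbf{Step 1 (linearity).} Fix \(\beta\) as in the statement and put \(\eta=e^{-\phi}h_\delta^T\beta\). This is smooth with \(\supp\eta=\supp\beta\Subset D_\delta\), and \(\beta=e^{\phi}h_\delta^{-T}\eta\). Since \(B_\phi\) acts only on the form part and commutes with \(\Id_{\utr}\), we get pointwise
\[
  \inner{B_\phi\beta,\beta}_{h_\delta}e^{-\phi}=e^{\phi}\,\inner{B_\phi\eta,\eta}_{h_\delta^{-1}}
\]
(with the appropriate transpose convention). By Lemma~\ref{lem:adj}, \(|\dbar^*_{h_\delta e^{-\phi}}\beta|^2_{h_\delta}e^{-\phi}=e^{\phi}|\dbar_0^*\eta|^2_{h_\delta^{-1}}\). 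Inserting \(h_\delta^{-1}(z)=\int\rho_\delta(y)h^{-1}(z-y)\,dV(y)\) and applying Fubini (all integrands are nonnegative, or integrable because \(h^{-1}\in L^1_{\mathrm{loc}}\) and \(\eta\) is smooth with compact support), the desired inequality reduces to showing, for each fixed \(|y|<\delta\),
\[
  \int_D e^{\phi}\inner{B_\phi\eta,\eta}_{h^{-1}(\cdot-y)}\,dV
  \le\int_D e^{\phi}|\dbar_0^*\eta|^2_{h^{-1}(\cdot-y)}\,dV .
\]
It is enough to establish this for almost every \(y\).

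\textbf{Step 2 (translation).} Substitute \(z=w+y\). The displayed inequality becomes
\[
  \iinner{B_{\phi_y}\alpha_y,\alpha_y}_{he^{-\phi_y}}\le\|\dbar^*_{he^{-\phi_y}}\alpha_y\|^2_{he^{-\phi_y}},
\]
where \(\phi_y=\phi(\cdot+y)\), \(\eta_y=\eta(\cdot+y)\) and \(\alpha_y=e^{\phi_y}h^{-T}\eta_y\). Here we use that \(\dbar_0^*\) and \(B\) commute with translations, since \(\om\) is translation invariant. Because \(\phi\) is smooth and strictly \(p\)-psh on a neighbourhood of \(\overline D\), for \(\delta\) small the function \(\phi_y\) is smooth and strictly \(p\)-psh on \(D\), so it is an admissible test weight. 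Moreover \(\supp\eta_y\subset\supp\beta-y\Subset D\). Lemma~\ref{lem:Bochner}, applied on \(D\) with weight \(\phi_y\), then yields exactly this inequality, provided \(\alpha_y\) is admissible there.

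\textbf{Step 3 (admissibility; the main point to check).} We need \(\alpha_y\in\Dom(\dbar^*_{he^{-\phi_y}})\cap L^2_{(p,n)}(D,\utr,he^{-\phi_y})\). We have \(\|\alpha_y\|^2=\int e^{\phi_y}\inner{h^{-1}\eta_y,\eta_y}\), which is finite because \(h^{-1}\in L^1_{\mathrm{loc}}\) and \(\eta_y\) is smooth with compact support. For the adjoint, the first identity of Lemma~\ref{lem:adj} (applied with \(\phi_y\)) gives \(\dbar^*\alpha_y=e^{\phi_y}h^{-T}\dbar_0^*\eta_y\), whose norm is again finite for the same reason. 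To certify membership in the domain, I will check directly that \(g\mapsto\iinner{\dbar g,\alpha_y}\) is bounded on \(\Dom\dbar\). The factors \(h\) and \(h^{-1}\) cancel in the pairing, leaving \(\int\inner{\dbar g,\eta_y}\), which is handled by integration by parts against the compactly supported smooth \(\eta_y\). I expect this step to be the main technical obstacle: one must verify that a merely \(L^1_{\mathrm{loc}}\) \(h^{-1}\) causes no trouble with the domain, and that the Fubini interchange and the measure-zero exceptional set in \(y\) are harmless. Once this is settled, integrating the inequality of Step 2 against \(\rho_\delta(y)\,dV(y)\) gives the theorem.
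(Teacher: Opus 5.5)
Your proposal is correct and is essentially the paper's proof. You fix $\eta=e^{-\phi}h_\delta^T\beta$, note that both sides are linear in $h_\delta^{-1}$, apply Lemma~\ref{lem:Bochner} for each translate $|y|<\delta$, and average against $\rho_\delta(y)\,dV(y)$ via Fubini and Lemma~\ref{lem:adj}. The differences are only cosmetic: you translate the form and weight ($\eta_y$, $\phi_y$ on $D$) where the paper translates the metric and domain ($h^y$ on $D^y$ with $\phi$ fixed), and you explicitly check that $\alpha_y\in\Dom(\dbar^*_{he^{-\phi_y}})$, which the paper leaves implicit.
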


\begin{proof}
Let \(\beta\in C_c^\infty(D,\Lambda^{p,n}\otimes\utr)\) with
\(\supp\beta\Subset D_\delta\).
For \(|y|<\delta\), set \(D^y:=D+y\), \(h^y(z):=h(z-y)\), so that
\(h^y\) is a Hermitian metric on \(D^y\). Since translation preserves
\(\om\) and \((D,\om;\utr,h)\) satisfies the optimal \(L^2\)-estimate
property, the pair \((D^y,\om;\utr,h^y)\) satisfies the same property.
Set
\[
  \eta:=e^{-\phi}h_\delta^T\beta, \qquad
  \alpha:=e^{\phi}(h^y)^{-T}\eta.
\]
Applying Lemma~\ref{lem:Bochner} to \((D^y,\om;\utr,h^y)\) with the
weight \(\phi\) (which is \(p\)-psh on a neighbourhood of
\(\overline D\), hence on \(D^y\) for small \(y\)) and the datum
\(\alpha\) gives
\begin{equation}\label{eq:conv-base}
  \iinner{B_\phi e^{\phi}(h^y)^{-T}\eta,\eta}_{0,D^y}
  \le\iinner{e^{\phi}(h^y)^{-T}\dbar_0^*\eta,\dbar_0^*\eta}_{0,D^y}.
\end{equation}
Since \(\eta\) and \(\dbar_0^*\eta\) are both supported in
\(\supp\beta\), and \(\supp\beta\subset D\cap D^y\) for
\(|y|<\delta\), both integrands in \eqref{eq:conv-base} vanish outside
\(D\cap D^y\), so the integrals over \(D^y\) equal the integrals
over \(D\):
\[
  \iinner{B_\phi e^{\phi}(h^y)^{-T}\eta,\eta}_{0,D}
  \le\iinner{e^{\phi}(h^y)^{-T}\dbar_0^*\eta,\dbar_0^*\eta}_{0,D}.
\]
Multiply by \(\rho_\delta(y)\) and integrate over \(y\in\CC^n\). By
Fubini's theorem and \eqref{eq:hdelta}, we obtain
\[
  \iinner{B_\phi e^{\phi}h_\delta^{-T}\eta,\eta}_{0,D}
  \le\iinner{e^{\phi}h_\delta^{-T}\dbar_0^*\eta,\dbar_0^*\eta}_{0,D}.
\]
Then by Lemma \ref{lem:adj}, we complete the proof.
\end{proof}

We observe that in the proofs of Deng--Ning--Wang--Zhou \cite{DNWZ22} (characterizing smooth Nakano semi-positive metrics via the optimal \(L^2\)-estimate for \((n,1)\)-forms) and Watanabe \cite{Watanabe} (characterizing dual Nakano \(p\)-semi-positive metrics via \((p,n)\)-forms), the test weights are smooth strictly plurisubharmonic functions on a neighborhood of \(\overline{D}\), and the test forms are compactly supported smooth \((p,n)\)-forms. Theorem~\ref{thm:conv} provides precisely the Bochner inequality for such test forms supported in \(D_\delta\). Therefore, applying their arguments directly yields the following theorem.

\begin{theorem}\label{thm:dual-nak}
Let \(h\) be a singular Hermitian metric on \(\utr\) such that
\(h^*\) is  locally integrable. Assume
that \((D,\om;\utr,h)\) satisfies the optimal \(L^2\)-estimate property
for \((p,n)\)-forms. Then for every sufficiently small \(\delta>0\), the
smooth Hermitian metric \(h_\delta\) is dual Nakano \(p\)-semi-positive on \(D_\delta\).
\end{theorem}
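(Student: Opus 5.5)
The plan is to reduce Theorem~\ref{thm:dual-nak} to the smooth converse argument of Deng--Ning--Wang--Zhou and Watanabe. Theorem~\ref{thm:conv} supplies the input that argument needs. Fix $\delta>0$ small enough that Theorem~\ref{thm:conv} applies.

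\textbf{Smoothness of $h_\delta$.} First I check that $h_\delta$ is a smooth Hermitian metric on $D_\delta$. Since $h^{-1}$ is locally integrable and positive definite almost everywhere, the convolution $h_\delta^{-1}$ in \eqref{eq:hdelta} is smooth on $D_\delta$. It is also positive definite there: for $v\neq 0$, the function $v^*h^{-1}(z-y)v$ is positive almost everywhere, and it is integrated against $\rho_\delta>0$ on a set of positive measure. Hence $h_\delta$ is smooth and positive definite on $D_\delta$.

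\textbf{Argue by contradiction.} Suppose $h_\delta$ is not dual Nakano $p$-semi-positive at some $x_0\in D_\delta$. Then there is a $(p,n)$-form $\beta_0$ at $x_0$ with
\[
\inner{[i\Theta_{h_\delta},\Lambda_{\om}]\beta_0,\beta_0}_{h_\delta}<0 .
\]
I localize in a small ball $B(x_0,r)\Subset D_\delta$. I take weights
\[
\phi_t=t\bigl(|z-x_0|^2-r^2\bigr)\quad\text{or}\quad \phi=|z-x_0|^2\cdot\varepsilon ,
\]
which are strictly psh, hence strictly $p$-psh, on a neighbourhood of $\overline D$. I also take test forms $\beta=\chi\, \tilde\beta$, where $\chi$ is a cutoff supported near $x_0$ and $\tilde\beta$ is chosen so that $\dbar^*_{h_\delta e^{-\phi}}\beta$ is small near $x_0$. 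Concretely, I pick $\tilde\beta$ to be a local section that is holomorphic in the appropriate sense with respect to the Chern connection, in normal coordinates for $h_\delta$ at $x_0$; this is the standard choice in the cited proofs.

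\textbf{Bochner--Kodaira--Nakano and rescaling.} The Bochner--Kodaira--Nakano identity for the smooth metric $h_\delta e^{-\phi}$ on compactly supported forms gives
\[
\|\dbar^*\beta\|^2\le\|\bar\partial\beta\|^2+\|\bar\partial^*\beta\|^2=\|D'^*\beta\|^2+\iinner{[i\Theta_{h_\delta}+i\ddbar\phi,\Lambda_{\om}]\beta,\beta}.
\]
Here $\dbar\beta=0$ because $\beta$ has bidegree $(p,n)$. Combining this with the inequality of Theorem~\ref{thm:conv},
\[
\iinner{B_\phi\beta,\beta}\le\|\dbar^*\beta\|^2,
\]
gives
\[
\iinner{[i\Theta_{h_\delta},\Lambda_{\om}]\beta,\beta}+\|D'^*\beta\|^2\ge 0 .
\]
Now I rescale $\beta$ to concentrate at $x_0$, for instance $\chi(z)=\chi_1((z-x_0)/\varepsilon)$ with $D'^*$-adapted coefficients. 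The $\|D'^*\beta\|^2$ term is then of lower order compared with the curvature term, which is the scaling trick of DNWZ and Watanabe. Letting $\varepsilon\to0$ yields
\[
\inner{[i\Theta_{h_\delta},\Lambda_{\om}]\beta_0,\beta_0}\ge0,
\]
which is the desired contradiction.

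\textbf{Main obstacle.} The delicate step is making $\|D'^*\beta\|^2$ negligible. The paper's remark indicates the cited arguments need exactly two things: strictly $p$-psh test weights defined near $\overline D$, and compactly supported smooth test forms. Theorem~\ref{thm:conv} provides both, for forms supported in $D_\delta$, and my localization keeps supports inside $B(x_0,r)\subset D_\delta$. So I expect to quote their construction verbatim with $h$ replaced by the smooth metric $h_\delta$ and $D$ replaced by $D_\delta$. The only extra verification is that every weight they use extends as a strictly $p$-psh function to a neighbourhood of $\overline D$; quadratic weights do.
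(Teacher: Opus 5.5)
Your top-level reduction is the same as the paper's: Theorem~\ref{thm:conv} supplies the Bochner inequality for the smooth metric $h_\delta$ (strictly $p$-psh weights near $\overline D$, test forms supported in $D_\delta$), and one then runs the smooth converse of Deng--Ning--Wang--Zhou and Watanabe. Your check that $h_\delta$ is smooth and positive definite is also fine. At the level of "cite that argument", which is all the paper gives, your proposal matches. However, the mechanism you write down for the cited step has a genuine gap and would not prove the statement for $p<n$.

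\textbf{The Bochner--Kodaira--Nakano identity is misapplied.} For a compactly supported $(p,n)$-form it is $\dbar\beta$ that vanishes by degree, not $D'\beta$. With $H=h_\delta e^{-\phi}$ the identity is
\[
\|\dbar^*_H\beta\|_H^2=\|D'_H\beta\|_H^2+\|D'^*_H\beta\|_H^2+\iinner{[i\Theta_{h_\delta}+i\ddbar\phi,\Lambda_{\om}]\beta,\beta}_H,\qquad D'_H\beta=D'_{h_\delta}\beta-\partial\phi\wedge\beta .
\]
So Theorem~\ref{thm:conv} only gives $\iinner{[i\Theta_{h_\delta},\Lambda_{\om}]\beta,\beta}_H+\|D'_H\beta\|_H^2+\|D'^*_H\beta\|_H^2\ge0$. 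You combined the $(n,q)$-fact $D'\beta=0$ with the $(p,n)$-fact $\dbar\beta=0$. The term you dropped vanishes only for $p=n$. It is also the only place where $\phi$ enters apart from the weight, since $D'^*_H=D'^*_{h_\delta}$.

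\textbf{Neither localization makes the first-order terms negligible.}
\begin{itemize}
\item Shrinking the cutoff makes them dominate. In the flat unweighted model, $\|D'\beta\|^2+\|D'^*\beta\|^2=\sum_j\|\partial_{z_j}\beta\|^2$. By Poincar\'e this is at least $c\,\varepsilon^{-2}\|\beta\|^2$ for $\beta$ supported in an $\varepsilon$-ball, while the curvature term is only $O(\|\beta\|^2)$.
\item Concentrating the weight, $\phi_t=t|z-x_0|^2$ with $t\to\infty$, is blocked by the $-\partial\phi_t\wedge\beta$ term. In the flat model, the Morrey--Kohn--H\"ormander formula on $(p,n)$-forms gives $\|\dbar^*_{\phi_t}\beta\|^2=\sum_j\|\partial_{\bar z_j}\beta\|^2+nt\|\beta\|^2$, whereas $\iinner{B_{\phi_t}\beta,\beta}=pt\|\beta\|^2$. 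So the inequality of Theorem~\ref{thm:conv} holds with a margin $(n-p)t\|\beta\|^2$ and cannot detect an $O(1)$ curvature defect.
\end{itemize}

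What you sketch is in effect the $p=n$ argument, where the margin is $0$ and constant forms are extremal. For $p<n$ you must use Watanabe's actual construction and check that its weights and test forms are admissible in Theorem~\ref{thm:conv}. Checking that your own quadratic weights extend past $\overline D$ is not a substitute for that.
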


By Theorem~\ref{thm:dual-nak}, the dual metric \(h_\delta^*\) is smooth and uniform Griffiths \(p\)-semi-negative on \(D_\delta\), and it converges to \(h^*\) in \(L^1_{\mathrm{loc}}\). If we assume that \(h^*\) is strongly upper semi-continuous, then applying Proposition~\ref{prop:sing-grif-basic}(d) to \(h_\delta^*\) yields the following corollary.

\begin{corollary}\label{cor:dual-to-grif-p}
Let \(h\) be a singular Hermitian metric on \(\utr\) such that the norm \(|s|^2_{h^*}\) is strongly upper semi-continuous for every local holomorphic section \(s\) of \(\utr^*\). Assume that \((D,\om;\utr,h)\) satisfies the optimal \(L^2\)-estimate property for \((p,n)\)-forms. Then \(h\) is uniform Griffiths \(p\)-semi-positive.
\end{corollary}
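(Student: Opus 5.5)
The plan is to combine Theorem~\ref{thm:dual-nak} with Proposition~\ref{prop:sing-grif-basic}(d), applied to the dual bundle \(\utr^*\) with the dual metrics. The strong upper semi-continuity of \(|s|^2_{h^*}\) for every local holomorphic section \(s\) of \(\utr^*\) implies upper semi-continuity. By Remark~\ref{rmk:ppsh-basic}(2) it also implies that these functions are locally bounded above, hence locally integrable. So \(h^*\) is locally integrable, and the convolution \(h_\delta\) in \eqref{eq:hdelta} is well defined. We may therefore fix a relatively compact open \(U\Subset D\), and all statements below are local.

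First, by Theorem~\ref{thm:dual-nak}, for every sufficiently small \(\delta>0\) the smooth metric \(h_\delta\) is dual Nakano \(p\)-semi-positive on \(D_\delta\). By Proposition~\ref{prop:nak-grif}, \(h_\delta\) is then uniform Griffiths \(p\)-semi-positive on \(D_\delta\). Remark~\ref{rmk:grif-equiv}(3) converts this into the statement that \(h_\delta^*=h_\delta^{-1}\) is a smooth uniform Griffiths \(p\)-semi-negative metric on \(\utr^*\) over \(D_\delta\). Equivalently, by Proposition~\ref{prop:char-grif} applied on the dual side, \(|s|^2_{h_\delta^*}\) is \(p\)-psh for every holomorphic \(s\).

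Second, we verify convergence. By definition \(h_\delta^{-1}=\rho_\delta * h^{-1}\) entrywise. Since \(h^{-1}\in L^1_{\mathrm{loc}}\), standard mollifier theory gives \(h_\delta^*\to h^*\) in \(L^1_{\mathrm{loc}}(D)\) as \(\delta\to0\). Take a sequence \(\delta_j\to0\). On any \(U\Subset D\), the metrics \(h_{\delta_j}^*\) are defined for \(j\) large and form a sequence of smooth uniform Griffiths \(p\)-semi-negative metrics converging to \(h^*\) in \(L^1(U)\).

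Third, we apply Proposition~\ref{prop:sing-grif-basic}(d) on \(U\) to the metric \(h^*\) on \(\utr^*\). Its hypothesis, strong upper semi-continuity of \(|s|^2_{h^*}\), is exactly what is assumed. This yields that \(h^*\) is uniform Griffiths \(p\)-semi-negative on \(U\), that is, \(|s|^2_{h^*}\) is \(p\)-psh on \(U\). Since \(U\) is arbitrary and \(p\)-plurisubharmonicity is local, Definition~\ref{def:sing-grif}(i) shows that \(h\) is uniform Griffiths \(p\)-semi-positive on \(D\).

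The main obstacle is the use of (d): it needs a.e. convergence along almost every \(p\)-dimensional affine slice, together with the strong upper semi-continuity that recovers the values on the exceptional null set. Both are packaged in the proof of (d) via Fubini. The only additional care needed here is to make sure the \(L^1\) convergence is applied on a fixed \(U\Subset D_\delta\) for all small \(\delta\), after passing to an a.e.-convergent subsequence.
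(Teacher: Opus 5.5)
Your proposal is correct and follows the paper's own argument. Theorem~\ref{thm:dual-nak} gives smooth metrics \(h_\delta^*\) that are uniform Griffiths \(p\)-semi-negative (via Proposition~\ref{prop:nak-grif} and duality) and converge to \(h^*\) in \(L^1_{\mathrm{loc}}\), and Proposition~\ref{prop:sing-grif-basic}(d) then finishes; your explicit check that \(h^*\) is locally integrable, which the paper leaves implicit, is a useful addition, though it follows from nonnegativity, upper semi-continuity of \(|s|^2_{h^*}\) and polarization rather than from Remark~\ref{rmk:ppsh-basic}(2).
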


\section{From \((n,p)\)-estimates to uniform Griffiths \(p\)-positivity via restriction}\label{sec:restriction}

In this section, we aim to show that the optimal \(L^2\)-estimate property for \((n,p)\)-forms implies uniform Griffiths \(p\)-positivity. It should be noted that, as demonstrated by \cite[Example 6.3]{LXYZ24}, the standard convolution technique does not preserve Nakano positivity. Consequently, we cannot directly apply the convolution technique on the ambient space to achieve the result. Instead, we must reduce to the restriction of the bundle to \(p\)-dimensional complex subspaces, where the \((n,p)\)-estimate descends to a \((p,p)\)-estimate. The core strategy is to first establish the preservation of the \(L^2\)-estimate property under restriction for lower semi-continuous Hermitian metrics with coherent multiplier submodule sheaves.

First, we recall the definition of the multiplier submodule sheaf and the Lebesgue-type differentiation theorem from \cite{Liu2025v2}.

\begin{definition}[\cite{Cataldo98}]
    The multiplier submodule sheaf $\mathcal{E}(h)$ of $\mathcal{O}(E)$ associated to $(E,h)$  is the sheaf of the germs $s_x \in \mathcal{O}(E)_x$
    such that $|s_x|^2_h$ is integrable in some neighborhood of $x$.
\end{definition}

\begin{remark}
  Assume that \(h^*\) is locally bounded and \((D,\om;\utr,h)\) satisfies the optimal \(L^2\)-estimate property for \((n,1)\)-forms. Following the standard arguments of \cite{Dem-93JDG, Nadel}, one can obtain the coherence of the multiplier ideal sheaf \(\mathcal{E}(h)\) (see \cite{HosonoInayama, Inayama2022}).
\end{remark}

\begin{lemma}[{\cite[Lemma 2.13]{Liu2025v2}}]\label{lem:weak-siu}
Let \(h\) be a Hermitian metric on \(\utr\) of rank \(r\) over a
domain \(U\times D'\subset\CC^{n-k}_{w'}\times\CC^k_{w''}\). Assume that
\(\calE(h)=\calO(\utr)\). Then for almost every \(z''\in D'\), for every
relatively compact subset \(V\Subset U\), every continuous section
\(f\in\Gamma(\overline V\times D',\Omega^{n,p}\otimes\utr)\), and every continuous
positive Hermitian matrix function
\(P\in C^0(\overline V\times D',\mathrm{Herm}^+\utr)\), we have
\[
  \int_V \inner{Pf,f}_{h(w',z''),\om'} \,dV_{\om'}(w')<+\infty
\]
and
\begin{align*}
   &\lim_{\eps\to0}\frac{1}{\mathrm{Vol}(\BB_{z''}(\eps))}\int_{V\times{\BB_{z''}(\eps)}} \inner{Pf,f}_{h(w',z''),\om}(w',w'')   \, dV_{\om}(w)
  \\= &    \int_V|\inner{Pf,f}_{h(w',z''),\om'}|\,dV_{\om'}(w').
\end{align*}
\end{lemma}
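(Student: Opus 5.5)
The plan is to reduce to a Fubini-type statement in the variable \(w'\), and then apply the Lebesgue differentiation theorem in \(w''\) while controlling the dependence on the test data \((V,f,P)\) through a countable dense family.

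\textbf{Step 1: an integrable envelope.} Since \(\calE(h)=\calO(\utr)\), every constant section \(e_j\) of \(\utr\) has \(|e_j|_h^2\in L^1_{\mathrm{loc}}\). Hence the trace \(\operatorname{tr} h=\sum_j|e_j|^2_h\) is locally integrable on \(U\times D'\). For \(P\) and \(f\) continuous on \(\overline V\times D'\) (shrinking \(D'\) if needed, or working on compact subsets \(K\Subset D'\)), we have the pointwise bound
\[
  \inner{Pf,f}_{h,\om}\le C\,\operatorname{tr}h .
\]
Exhaust \(U\) by a countable sequence \(V_m\Subset U\) and \(D'\) by compacts \(K_l\). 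By Fubini, for almost every \(z''\) the slice \(w'\mapsto \operatorname{tr}h(w',z'')\) is integrable on each \(V_m\). This gives the finiteness claim for all \(V,f,P\) simultaneously, since every \(V\Subset U\) lies in some \(V_m\).

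\textbf{Step 2: differentiation for a fixed density.} Set \(F(z'')=\int_{V}G(w',z'')\,dV(w')\), where \(G=\inner{Pf,f}_{h,\om}\). The average over \(V\times\BB_{z''}(\eps)\) is then the ball average of \(F\) at \(z''\). Since \(F\in L^1_{\mathrm{loc}}(D')\), the Lebesgue differentiation theorem gives convergence to \(F(z'')\) for a.e. \(z''\). The exceptional set, however, depends on \((V,f,P)\) and on \(z''\) through \(h(w',z'')\). Note also that the statement evaluates \(P,f\) at the moving point \(w''\) on the left-hand side but at \(z''\) on the right. I read the integrand on the left as \(\inner{P(w)f(w),f(w)}_{h(w),\om}\), and I will handle the freezing of \(P,f\) at \(z''\) using continuity.

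\textbf{Step 3: uniformity (main obstacle).} To make one null set work for all data, I will use the following structure. Write \(\inner{Pf,f}_h=\sum_{a,b}c_{ab}(w)\,h_{ab}(w)\), where the \(c_{ab}\) are continuous coefficient functions built from \(P\) and \(f\), and \(h_{ab}\) are the entries of \(h\). Each entry satisfies \(|h_{ab}|\le\operatorname{tr}h\).

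First take a countable set \(\{V_m\}\) of rational polyboxes. Apply the Lebesgue differentiation theorem to the countably many functions
\[
  z''\mapsto\int_{V_m}q(w')\,h_{ab}(w',z'')\,dV,
\]
with \(q\) ranging over polynomials with rational coefficients, and also to \(z''\mapsto\int_{V_m}\operatorname{tr}h\). Let \(z''\) be a common Lebesgue point of all of these and a good Fubini slice.

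For general continuous \(c_{ab}\), use uniform continuity on \(\overline{V}\times K\). For \(|w''-z''|<\eps\), we have
\[
  |c_{ab}(w',w'')-c_{ab}(w',z'')|\le\omega(\eps),
\]
where \(\omega\) is the modulus of continuity. Then approximate \(c_{ab}(\cdot,z'')\) by a rational polynomial \(q\) within \(\tau\). The error in the averages is bounded by
\[
  (\omega(\eps)+\tau)\cdot\frac{1}{\mathrm{Vol}}\int_{V_m\times\BB}\operatorname{tr}h .
\]
This quantity stays bounded as \(\eps\to0\), because \(z''\) is a Lebesgue point of the trace integral. Letting \(\eps\to0\) and then \(\tau\to0\) yields the limit.

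For general \(V\), approximate \(V\) from inside and outside by finite unions of boxes, and use the absolute continuity of \(\int\operatorname{tr}h(\cdot,z'')\) on the good slice. Matching the outer-approximation error in the \(\eps\)-averages is the delicate point. I would handle it by also including the indicator functions of the boxes in the countable family. The measure of the boundary layer then tends to zero uniformly, through the limit of its own Lebesgue averages.

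Finally, since the integrand is nonnegative, the absolute value on the right-hand side is harmless.
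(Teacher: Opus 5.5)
\textbf{There is a gap in the last step of Step 3.} Your Step 1, Step 2, and the box-and-polynomial part of Step 3 are correct. At a common Lebesgue point of the moments \(\int_{V_m}q\,h_{ab}(\cdot,w'')\) and of \(\int_{V_m}\operatorname{tr}h\), the modulus-of-continuity estimate together with Stone--Weierstrass gives the limit for every finite union of rational boxes. Sandwiching then extends this to every \(V\) whose boundary is Lebesgue-null. That already covers the ball \(\BB^p_{0'}(r_1)\), which is the only case used in Proposition~\ref{prop:restriction-pn}.

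The failure is the passage to an arbitrary relatively compact \(V\). Put \(\mu_0=\operatorname{tr}h(\cdot,z'')\,dV\) and \(\mu_\eps(A)=\mathrm{Vol}(\BB_{z''}(\eps))^{-1}\int_{A\times\BB_{z''}(\eps)}\operatorname{tr}h\). Every functional in your countable family is \emph{linear} in the slice \(h(\cdot,w'')\) and tests it against continuous functions or box indicators. So at a good \(z''\) your argument only yields weak convergence \(\mu_\eps\to\mu_0\), and weak convergence controls \(\mu_\eps(V)\) only when \(\mu_0(\partial V)=0\).

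Take \(V\) open with \(|\partial V|>0\), for instance a ball minus a compact set of positive measure with empty interior. Then any finite unions of boxes \(W\subset V\subset W'\) satisfy \(\mu_0(W'\setminus W)\ge\mu_0(\partial V)>0\), because \(\det h>0\) a.e.\ forces \(\operatorname{tr}h(\cdot,z'')>0\) a.e.\ on a.e.\ slice. So your boundary-layer averages converge to a quantity that cannot be made small, and adding box indicators to the family does not change this. A smaller point: outer approximation also requires \(f\) and \(P\) beyond \(\overline V\times D'\), for example via a Tietze extension.

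\textbf{The missing idea} is a \emph{strong} Lebesgue point of the \(L^1\)-valued map \(w''\mapsto h(\cdot,w'')|_{V_m}\). You can still get it from countably many scalar applications of the differentiation theorem, but with nonlinear functionals. Take a countable dense set \(\{\psi_k\}\subset L^1(V_m,\mathrm{Herm}(\CC^r))\) and set \(\Psi_{m,k}(w'')=\int_{V_m}|h(w',w'')-\psi_k(w')|\,dV(w')\), which lies in \(L^1_{\mathrm{loc}}(D')\) by Tonelli. Let \(z''\) be a common Lebesgue point of all \(\Psi_{m,k}\) that is also a good Fubini slice. The triangle inequality then gives, for every \(k\),
\[
  \limsup_{\eps\to0}\frac{1}{\mathrm{Vol}(\BB_{z''}(\eps))}\int_{\BB_{z''}(\eps)}\int_{V_m}|h(w',w'')-h(w',z'')|\,dV(w')\,dV(w'')\le 2\,\|h(\cdot,z'')-\psi_k\|_{L^1(V_m)},
\]
so the left side vanishes. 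Combine this with your bound \(\omega(\eps)\,\mathrm{Vol}(\BB_{z''}(\eps))^{-1}\int_{V_m\times\BB_{z''}(\eps)}\operatorname{tr}h\) on the variation of the coefficients \(c_{ab}\). This proves the limit simultaneously for all measurable \(V\subset V_m\) and all continuous \(f\) and \(P\). It needs no approximation of \(V\) by boxes, no polynomial approximation, and no extension of \(f\) or \(P\).
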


\begin{proposition}\label{prop:restriction-pn}
Let \(h\) be a lower semi-continuous Hermitian metric on \(\utr\). Assume that $\calE(h)=\calO(\utr)$  and  \((D,\om;\utr,h)\)
satisfies the optimal \(L^2\)-estimate property for \((n,p)\)-forms. Fix a \(p\)-dimensional complex subspace \(S=\{w''=0\}\subset\CC^n\),
with coordinates \(w=(w',w'')\in\CC^p\times\CC^{n-p}\). Then for almost
every \(z''\in S^\perp\) with \(S_{z''}\cap D\neq\emptyset\),  and every \(p\)-dimensional ball
\(B\Subset S_{z''}\cap D\), \((B,\om|_B;\utr,h)\) satisfies the optimal \(L^2\)-estimate property for \((p,p)\)-forms.
\end{proposition}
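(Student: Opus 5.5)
We prove the restriction property by extending \((p,p)\)-data on the slice to \((n,p)\)-data on a thin tubular neighbourhood in \(D\), solving there with the ambient optimal estimate, and returning to the slice via Lemma~\ref{lem:weak-siu}. This follows the \((n,1)\)-restriction argument of \cite{Liu2025v2}.

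\textbf{Setup and reduction.} Fix a \(p\)-dimensional ball \(B\Subset S_{z''}\cap D\), a smooth strictly \(p\)-psh weight \(\phi\) on \(B\), and a \(\dbar\)-closed \(\utr\)-valued \((p,p)\)-form \(f\) on \(B\) with
\[
\int_B\inner{B_\phi^{-1}f,f}_h e^{-\phi}\,dV<\infty .
\]
On a \(p\)-dimensional space, \((p,p)\)-forms are top-degree, so we write \(f=g\,dw'\wedge d\bar w'\) with \(g\) an \(\utr\)-valued function.

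By shrinking \(B\) slightly and using an exhaustion argument, we may assume the following. First, \(\phi\) extends smoothly and strictly \(p\)-psh to a neighbourhood of \(\overline B\). Second, \(g\) is smooth up to \(\overline B\). The general case then follows by a weak-\(*\) limit of solutions with uniformly bounded norms. Smoothness of \(g\) can be arranged by a standard approximation, since \(\dbar\)-closedness is automatic in top degree.

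\textbf{Extension to a thin tube.} Extend the data to \(B\times\BB_{z''}(\eps)\subset D\) as follows:
\[
\tilde\phi(w',w'')=\phi(w')+A|w''-z''|^2,
\qquad
\tilde f=g(w')\,dw'\wedge dw''\wedge d\bar w'.
\]
This \(\tilde f\) is a \(\dbar\)-closed \((n,p)\)-form.

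The weight \(\tilde\phi\) is strictly \(p\)-psh, although it is not a global weight on \(D\). To fix this, we localize by replacing the domain. The optimal \(L^2\)-estimate property in Definition~\ref{def:L2est-intro} is stated for \(D\) itself, so we instead take a weight that is huge off the tube: add \(\chi\), where \(\chi\) is a convex increasing function of a \(p\)-psh exhaustion of \(B\times\BB(\eps)\)-type, so that \(\tilde\phi+\chi\) is strictly \(p\)-psh on all of \(D\). This is legitimate because \(|w''|^2\) and \(|w'|^2\) are psh. Then truncate \(\tilde f\) by multiplying \(g\) by a cutoff in \(w'\) (approximating \(B\) from inside), keeping the weight unchanged near the support.

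The key computation is that, with the diagonal product structure, the eigenvalues of \(i\ddbar\tilde\phi\) are those of \(i\ddbar\phi\) together with \(A\). Hence on forms of type \(dw'\wedge dw''\wedge d\bar w'\) we have \(B_{\tilde\phi}=B_\phi\otimes\Id\), and no \(A\)-dependence enters the right-hand side.

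\textbf{Solving and restricting.} The ambient estimate yields \(\tilde u\) with \(\dbar\tilde u=\tilde f\) and
\[
\int|\tilde u|^2_h e^{-\tilde\phi}\le\int\inner{B_\phi^{-1}g,g}_h e^{-\tilde\phi}.
\]
Divide both sides by \(\mathrm{Vol}(\BB_{z''}(\eps))\) and apply Lemma~\ref{lem:weak-siu} with \(P=B_\phi^{-1}e^{-\phi}\), which is continuous and positive. Since \(\calE(h)=\calO(\utr)\), for almost every \(z''\) the right-hand side converges to \(\int_B\inner{B_\phi^{-1}f,f}_h e^{-\phi}\) as \(\eps\to0\).

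For the left-hand side, we decompose \(\tilde u\) into its component along \(dw''\) and the rest. The contraction with \(\partial_{w''}\), after averaging over \(w''\), produces candidate solutions \(u_\eps\) on \(B\) of \(\dbar u_\eps=f\). These \(u_\eps\) have norms bounded by the averaged left-hand side. This follows by Fubini, taking a slice \(w''\) near \(z''\) where the slice norm is at most the average, combined with the mean-value trick in the \(w''\) variables, since \(\tilde f\) is independent of \(w''\).

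Letting \(A\to\infty\) kills the components containing \(d\bar w''\). Lower semi-continuity of \(h\), together with weak \(L^2\) compactness of \(u_\eps\) translated back to the fibre \(S_{z''}\), then gives a limit \(u\) on \(B\) with \(\dbar u=f\). Fatou's lemma gives
\[
\int_B|u|^2_h e^{-\phi}\le\int_B\inner{B_\phi^{-1}f,f}_h e^{-\phi}.
\]

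\textbf{Main obstacle.} The delicate step is the passage to the limit on the left-hand side. It requires comparing \(h(w',w'')\) on nearby slices with \(h(w',z'')\) from below, which is exactly where lower semi-continuity of \(h\) is used (via Fatou's lemma). It also requires producing a genuine \(\dbar\)-solution on the single slice from the tube solutions. I would first try averaging \(\tilde u\) in \(w''\) against a radial bump, which commutes with \(\dbar\) because \(\tilde f\) is \(w''\)-independent, and then extract weak limits.
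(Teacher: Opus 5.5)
\textbf{Gap: you never construct a $\dbar$-closed $(n,p)$-datum on all of $D$ with finite right-hand side.} Most of your plan matches the paper: the tube weight $\phi(w')+A|w''-z''|^2$, the fact that $B_{\tilde\phi}$ acts on $dW\wedge d\bar w'$-forms as $B_\phi$, solving on $D$, returning to the slice via Fubini and Lemma~\ref{lem:weak-siu}, and the passage to the limit by lower semi-continuity of $h$ plus Fatou. The missing step is the ambient datum, and it is the crux.

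Your $\tilde f=g\,dw'\wedge dw''\wedge d\bar w'$ lives only on the tube. A cutoff in $w'$ keeps it $\dbar$-closed, since it is of top degree in $d\bar w'$ and independent of $w''$. But it does not localize in $w''$, and you have only two options for that direction:
\begin{itemize}
\item Multiply by a cutoff $\chi_2(w'')$. This destroys $\dbar$-closedness.
\item Extend $w''$-independently over the whole slab $D\cap\{|w'|<r_2\}$. Then the right-hand side integrates $|g|^2_h$ up to $\partial D$. There $\calE(h)=\calO(\utr)$ gives no integrability, and a smooth $p$-psh weight on $D$ need not blow up.
\end{itemize}
Even when the integral is finite, a single fixed ``huge'' weight leaves an off-tube contribution of fixed size. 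That term does not disappear after you divide by $\mathrm{Vol}(\BB_{z''}(\eps))\to0$. Two further points are unaddressed: an exhaustion of the tube is not a smooth weight on $D$, and $\tilde\phi$ itself must first be extended to a $p$-psh function on all of $D$.

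\textbf{The paper's remedy.} Since $f$ is of top degree on the ball, write $f=\dbar_{w'}\alpha$ with $\alpha$ smooth. Set $F:=\dbar(\chi\,dW''\wedge\alpha)$ with $\chi=\chi_1(w')\chi_2(w'')$. Then:
\begin{itemize}
\item $F$ is compactly supported and $\dbar$-exact.
\item $F$ equals $dW''\wedge f$ on the inner tube $\BB^p_{0'}(r_1)\times\BB^{n-p}_{z''}(\delta)$.
\item Its error terms $\dbar\chi\wedge dW''\wedge\alpha$ lie in $\{\Phi>0\}$, where $\Phi=\tau_1(|w'|^2)+\tau_2(|w''-z''|^2)$ is psh on $\CC^n$ and vanishes exactly on the closed inner tube.
\end{itemize}
Solve with weight $k\Phi+\tilde\phi$ and let $k\to\infty$ at fixed $\delta$, using $B_{k\Phi+\tilde\phi}\ge B_{\tilde\phi}$ and dominated convergence. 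This removes the off-tube part entirely, and only then is the tube shrunk.

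\textbf{A smaller point.} Your claim that letting $A\to\infty$ kills the components containing $d\bar w''$ has no mechanism behind it, but you do not need it. Write $v=v'\wedge dW''+v''$, where every term of $v''$ contains some $d\bar w''$. Every term of $\dbar v''$ and of $\dbar_{w''}(v'\wedge dW'')$ still contains a $d\bar w''$. So the pure-$d\bar w'$ part of $\dbar v=F$ gives $\dbar_{w'}v'=f$ directly, and $|v|^2\ge|v'|^2$ pointwise for the Euclidean metric.
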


\begin{proof}
 Fix \(z''\in A\), where \(A\subset S^\perp\) is the full-measure set of
Lemma~\ref{lem:weak-siu}.   Denote $\om'=\om|_B$. Fix a \(p\)-dimensional ball
\(B\Subset S_{z''}\cap D\) and  a smooth strictly p-psh function \(\phi\) on  \( B\) and a \(\dbar\)-closed
\(f\in L^2_{(p,p)}(B;he^{-\phi})\) with
\[
   \int_B\inner{B_{\phi}^{-1}f,f}_{\om',h}
    e^{-\phi}\,dV_{\om'}<+\infty.
\]

Since every \((p,p)\)-form on \(B\) is
automatically \(\dbar\)-closed. By  by
truncation, convolution and a standard approximation argument, we may assume that
\(f \in C_c^\infty\cap L^2_{(p,p)}(B;he^{-\phi})\).
After translating and rotating coordinates, we may assume
\(B=\BB^p_{0'}(r)\), \(\supp f\Subset\BB^p_{0'}(r_0)\) with \(r_0<r\) and $\BB^p_{0'}(r)\times \BB^{n-p}_{z''}(\delta_0)\Subset D$.
Choose \(r_0<r_1<r_2<r\), $\delta<\delta_0/2$ and cut-offs
\(\chi_1\in C_c^\infty(\BB^p_{0'}(r_2))\) with \(\chi_1\equiv1\) on
\(\BB^p_{0'}(r_1)\), and \(\chi_2\in C_c^\infty(\BB^{n-p}_{z''}(2\delta))\)
with \(\chi_2\equiv1\) on \(\BB^{n-p}_{z''}(\delta)\). Set
\(\chi:=\chi_1\chi_2\), \(dW''=dw''_1\wedge\cdots\wedge dw''_{n-p}\),
and choose \(\alpha\in C^\infty(B,\wedge^{p,p-1}\otimes\utr)\) with
\(\dbar_{w'}\alpha=f\). Define
\[
  F:=\dbar(\chi\,dW''\wedge\alpha)\in
  C_c^\infty(D,\wedge^{n,p}\otimes\utr).
\]
Since \(\dbar\alpha=f\) and \(\dbar f=0\), we have \(\dbar F=0\), and
\(F=dW''\wedge f\) on \(\BB^p_{0'}(r_1)\times\BB^{n-p}_{z''}(\delta)\).

In addition, by the Richberg approximation theorem, there is a smooth strictly p-psh function \(\tilde\phi\) on $D$ such that $\tilde\phi (w',w'')=\phi(w')+C|w''-z''|^2$ on $\BB^p_{0'}(r_0)\times \BB^{n-p}_{z''}(\delta_0)$.

Now we choose smooth convex nondecreasing \(\tau_1,\tau_2:[0,\infty)\to
[0,\infty)\) with \(\tau_1=0\) on \([0,r_1^2]\), \(\tau_1>0\) on
\((r_1^2,\infty)\), and \(\tau_2=0\) on \([0,\delta^2]\),
\(\tau_2>0\) on \([\delta^2,\infty)\). Set
\[
  \Phi(w',w''):=\tau_1(|w'|^2)+\tau_2(|w''-z''|^2),
\]
then \(\Phi\) is smooth plurisubharmonic.

By the optimal \(L^2\)-estimate for $(n,p)$-form on \(D\) with weight \(k\Phi+\tilde\phi \), there
exists \(v_{k,\delta}\in L^2_{(n,p-1)}(D,\utr;he^{-\Phi_k};\mathrm{loc})\) with
\(\dbar v_{k,\delta}=F\) and
\[
  \int_D|v_{k,\delta}|^2_{\om,h}e^{-k\Phi-\tilde\phi }\,dV_{\om}
  \le  \int_D\inner{B_{k\Phi+\tilde\phi }^{-1}F,F}_{\om,h}
    e^{-k\Phi-\tilde\phi }\,dV_{\om}.
\]
Write \(v_{k,\delta}=v'_{k,\delta}\wedge dW''+v''_{k,\delta}\), where \(v''_{k,\delta}\) contains at least
one \(d\bar w''\). Since \(\om\) is Euclidean, we have
\(|v_{k,\delta}|^2_{\om,h}\ge|v'_{k,\delta}|^2_{\om,h}\) and  \(\dbar_{w'}v'_{k,\delta}=f\)
 on \(\BB^p_{0'}(r_1)\times\BB^{n-p}_{z''}(\delta)\).

 Noticing that \(\Phi= 0\) on \(\BB^p_{0'}(r_1)\times\BB^{n-p}_{z''}(\delta)\) and  \(\Phi> 0\) on the support of $\dbar\chi$, we obtain that
\begin{align*}
    & \lim_{k\to\infty}\int_D\inner{B_{k\Phi+\tilde\phi }^{-1}F,F}_{\om,h}
    e^{-k\Phi-\tilde\phi }\,dV_{\om} \\
  = & \int_{\BB^p_{0'}(r_1)\times\BB^{n-p}_{z''}(\delta)}\inner{B_{\tilde\phi }^{-1}f\wedge dW'',f\wedge dW''}_{\om',h}e^{-\tilde\phi }\,dV_{\om'}.
\end{align*}
Letting $k\to \infty$, we can obtain $\{v'_\delta\}$ such that  \(\dbar_{w'}v'_{\delta}=f\) and
\[
  \int_D|v'_{\delta}|^2_{\om,h}e^{-\tilde\phi }\,dV_{\om}
  \le  \int_{\BB^p_{0'}(r_1)\times\BB^{n-p}_{z''}(\delta)}\inner{B_{\tilde\phi }^{-1}f\wedge dW'',f\wedge dW''}_{\om',h}e^{-\tilde\phi }\,dV_{\om'}.
\]
Moreover, it follows from Fubini's theorem that there exists
\(\xi_\delta\in\BB^{n-p}_{z''}(\delta)\) such that the slice
\(v'_{\delta}(\cdot,\xi_\delta)\) satisfies
\(\dbar_{w'}v'_k(\cdot,\xi_\delta)=f\) on \(B\) and
\begin{align*}
    & \int_B|v'_\delta(w',\xi_\delta)|^2_{\om',h}e^{-\phi}\,dV_{\om'} \\
\le & \frac{1}{\mathrm{Vol}(\BB^{n-p}_{z''}(\delta))}\int_{\BB^p_{0'}(r_1)\times\BB^{n-p}_{z''}(\delta)}\inner{B_{\tilde\phi }^{-1}f\wedge dW'',f\wedge dW''}_{\om,h}e^{-\tilde\phi }\,dV_{\om}.
\end{align*}

Noticing that the \((n,p)\)-form \(f\wedge dW''\) has antiholomorphic indices only in
$w'$,  we have $$\inner{B_{\tilde\phi }^{-1}f\wedge dW'',f\wedge dW''}_{\om,h}=\inner{B_{\phi }^{-1}f ,f}_{\om',h}.$$

Since \(\tilde\phi(w',z'')=\phi(w')\) on \(\BB^p_{0'}(r_1)\) and \(h\) has a positive lower bound locally, by the weak compactness of \(L^2_{\mathrm{loc}}\), we can choose a sequence \(\delta_m \to 0\) such that \(v'_{\delta_m}(w',\xi_{\delta_m})\) converges weakly to a limit \(u \in L^2_{(p,p-1)}(B, \om'; \utr, h)\). Then \(\dbar u=f\).

In addition, since \(h\) is lower semi-continuous, for any compact subset \(K\Subset B\), passing to a finite subcover of \(K\), we may assume there is a sequence of continuous Hermitian metrics \(h_j\) on \(K\) increasing to \(h\). Then by Fatou's lemma, we have
\begin{align*}
   &\int_{K} |u|^2_{\om',h_j}e^{-\phi}dV_{\om'}  \\  = & \int_{K}\lim_{\delta_m\to0}  |v'_{\delta_m}|^2_{\om',h_j(w',\xi_{\delta_m}) }e^{-\phi}dV_{\om'}   \\
   \le  &\liminf_{\delta_m\to0} \int_{K}  |v'_{\delta_m}|^2_{\om',h_j(w',\xi_{\delta_m}) }e^{-\phi} dV_{\om'}  \\
   \le & \liminf_{\delta_m\to0} \int_{K}  |v'_{\delta_m}|^2_{\om',h(w',\xi_{\delta_m}) }e^{-\phi} dV_{\om'}  \\
   \le & \liminf_{\delta_m\to0}  \frac{1}{\mathrm{Vol}(\BB^{n-p}_{z''}(\delta_m))}\int_{\BB^p_{0'}(r_1)\times\BB^{n-p}_{z''}(\delta_m)}\inner{B_{\tilde\phi }^{-1}f\wedge dW'',f\wedge dW''}_{\om,h}e^{-\tilde\phi }\,dV_{\om} \\
   = &  \int_B\inner{B_{\phi }^{-1}f, f}_{\om',h}e^{-\phi}dV_{\om'},
\end{align*}
where the last equality follows from Lemma~\ref{lem:weak-siu} as \(\delta_m \to 0\). Since \(h_j\) increases to \(h\) and \(K\Subset B\) is arbitrary, we obtain
\[
  \int_B|u|^2_{\om',h}e^{-\phi}\,dV_{\om'}
  \le\int_B\inner{B_{\phi}^{-1}f,f}_{\om',h}e^{-\phi}\,dV_{\om'}.
\]
\end{proof}

By combining the restriction property established above with the coherence hypothesis on the multiplier submodule sheaf, we can now rigorously pass from the optimal \(L^2\)-estimate property for \((n,p)\)-forms to uniform Griffiths \(p\)-positivity.

\begin{theorem}\label{thm:main}
Let \(h\) be a lower semi-continuous Hermitian metric on \(\utr\). Assume that the norm \(|s|^2_{h^*}\) is strongly upper semi-continuous for any local holomorphic section \(s\) of \(\utr^*\), the multiplier submodule sheaf \(\calE(h)\) is coherent, and \((D,\om;\utr,h)\) satisfies the optimal \(L^2\)-estimate property for \((n,p)\)-forms. Then \(h\) is uniform Griffiths \(p\)-semi-positive.
\end{theorem}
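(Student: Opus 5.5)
The plan is to reduce to the case where the multiplier submodule sheaf is trivial, restrict to $p$-dimensional slices, and then combine Proposition~\ref{prop:restriction-pn}, Corollary~\ref{cor:dual-to-grif-p} and Proposition~\ref{prop:sing-grif-basic}(c), as outlined in the introduction. Uniform Griffiths $p$-semi-positivity of $h$ means that $|s|^2_{h^*}$ is $p$-psh for every local holomorphic section $s$ of $\utr^*$. This is a local property, so it suffices to work near a fixed point $x_0\in D$.

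\textbf{Step 1: arranging $\calE(h)=\calO(\utr)$.} I first use coherence of $\calE(h)$ together with the fact that $\det h<+\infty$ almost everywhere. On the open set where the stalks of $\calE(h)$ have full rank, and after shrinking, I expect to be able to reduce to $\calE(h)=\calO(\utr)$ near generic points. I would then try to propagate the conclusion across the remaining analytic set using strong upper semi-continuity of $|s|^2_{h^*}$.

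An alternative I would try first is to show directly that the optimal $(n,p)$-estimate, combined with $h$ being lower semi-continuous (hence locally bounded below), forces $\calE(h)=\calO(\utr)$. The argument would be Nadel-type: solve $\dbar$ for $\dbar(\chi\, e)$ with a singular weight $2n\log|z-x_0|$ made $p$-psh. This produces holomorphic sections $e-u$ with $|e-u|_h^2$ integrable and spanning the fibre at $x_0$. Coherence then gives that $\calE(h)$ equals $\calO(\utr)$ as sheaves.

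\textbf{Step 2: restriction to slices.} Fix a $p$-dimensional subspace $S$. Proposition~\ref{prop:restriction-pn} gives, for almost every parallel translate $S_{z''}$ and every ball $B\Subset S_{z''}\cap D$, the optimal $L^2$-estimate for $(p,p)$-forms on $(B,\om|_B;\utr,h|_B)$. On $B$ the dimension equals $p$, so $(p,p)=(p,\dim)$ and Corollary~\ref{cor:dual-to-grif-p} applies. Its proof via Theorem~\ref{thm:dual-nak} only needs $h^*$ locally integrable; this holds because $h$ is lower semi-continuous and positive, so $h^*$ is locally bounded above. The corollary also needs $|s|^2_{h^*}$ to be strongly upper semi-continuous on the slice.

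There is a subtlety here. Strong upper semi-continuity on $D$ does not automatically restrict to slices. I would avoid the issue by using the $L^1$ convergence conclusion of Theorem~\ref{thm:main-intro} and Theorem~\ref{thm:dual-nak} on $B$ instead. This yields smooth $h_\delta$ on $B$ that are dual Nakano $p$-semi-positive, hence uniform Griffiths $p$-semi-positive by Proposition~\ref{prop:nak-grif}. So $|s|^2_{h_\delta^*}$ is $p$-psh on $B$, which, since $\dim B=p$, just means subharmonic.

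\textbf{Step 3: passing to the limit.} $|s|^2_{h_\delta^*}$ converges to $|s|^2_{h^*}$ in $L^1(B)$. Hence $|s|^2_{h^*}|_B$ agrees almost everywhere with a subharmonic function $\psi_B$ and satisfies the sub-mean value inequality in an averaged sense. I then apply the proof of Proposition~\ref{prop:sing-grif-basic}(c) to $h^*$. For almost every $z''$, the slice mean-value inequality holds at almost every point of the slice. The strong upper semi-continuity of $|s|^2_{h^*}$ on $D$ allows approximating any point $a$ by generic points $a_j$ on generic parallel slices with $|s|^2_{h^*}(a_j)\to|s|^2_{h^*}(a)$. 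Fatou's lemma then gives the mean-value inequality on $S$ itself. This holds for every $S$ and every $s$, so $|s|^2_{h^*}$ is $p$-psh.

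I expect the main obstacle to be Step 1. The hypothesis only states coherence of $\calE(h)$, whereas Proposition~\ref{prop:restriction-pn} needs $\calE(h)=\calO(\utr)$. Establishing this equality, or localizing away from a thin set and then recovering the statement there using strong upper semi-continuity, is where the real work lies. The Nadel-type argument with a $p$-psh singular weight is my first attempt.
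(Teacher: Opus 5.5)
Apart from Step~1, your argument follows the paper: Proposition~\ref{prop:restriction-pn} on almost every slice, the convolution result (Theorem~\ref{thm:dual-nak}) on the $p$-dimensional slice, and gluing via Proposition~\ref{prop:sing-grif-basic}(c),(d). Your Steps~2--3 are tighter than the paper's write-up. The paper applies Corollary~\ref{cor:dual-to-grif-p} to $h|_{S_{z''}\setminus Z}$, which tacitly needs strong upper semi-continuity along slices. You use only $L^1$-convergence on slices, Fubini, strong upper semi-continuity on $D$, and reverse Fatou, which is legitimate since $h^*$ is locally bounded.

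The genuine gap is Step~1, and neither of your mechanisms closes it. Coherence plus $\det h<+\infty$ a.e.\ does not give even one point where $\calE(h)$ has full rank, because finiteness of $\det h$ says nothing about local integrability of $h$. The Nadel-type argument is circular. Its right-hand side $\int|\dbar\chi|^2|e|^2_he^{-\phi}$ is finite only if $|e|^2_h$ is integrable on $\supp\dbar\chi$, which is what you are trying to prove; lower semi-continuity bounds $h$ from below, not from above. For $p\ge2$ it is also in the wrong bidegree: the hypothesis solves $\dbar$ only for $(n,p)$-forms, with $(n,p-1)$-form solutions, so no holomorphic sections $e-u$ arise.

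The paper handles Step~1 in one line. It asserts that coherence makes $\operatorname{supp}(\calO(\utr)/\calE(h))$ a proper analytic set $Z$. It then transfers the $(n,p)$-estimate to $D\setminus Z$ by $L^2$-negligibility of analytic sets \cite[Chapter~VIII-(7.3)]{Demailly2012}, and crosses $Z$ at the end by strong upper semi-continuity. This is essentially your first variant. But properness needs $\calE(h)_x=\calO(\utr)_x$ at some point of $D$, which coherence does not supply. For $p\ge2$ it can fail under all the hypotheses of the theorem. Take $r=1$, $D$ a ball, $\{q_j\}$ dense in $D$, $c_j>0$ with $\sum_j c_j<\infty$, and $v=-\sum_j c_j|z-q_j|^{2-2p}$; the series converges in $L^1_{\mathrm{loc}}$ since $p\le n$.
\begin{itemize}
\item Each summand is $p$-psh, because the sum of the $p$ smallest eigenvalues of $i\ddbar(-|z|^{2-2p})$ is $0$; hence $v$ is $p$-psh.
\item $h=e^{-v}$ equals $+\infty$ only on the null set $\{v=-\infty\}$, as the definition of singular metric permits, and it is lower semi-continuous.
\item $|s|^2_{h^*}=|s|^2e^{v}$ is $p$-psh, hence strongly upper semi-continuous.
\item H\"ormander's theorem with the smooth $p$-psh weights $v*\rho_\eps\downarrow v$, plus a weak limit, gives the optimal $(n,p)$-estimate.
\end{itemize}
Yet $e^{-v}\ge\exp(c_j|z-q_j|^{2-2p})$ is non-integrable near every $q_j$, and every nonzero germ is non-vanishing near some $q_j$. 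So $\calE(h)=0$, which is coherent, with $Z=D$. The conclusion of the theorem holds here, but Proposition~\ref{prop:restriction-pn} applies on no open set.

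So your proof, like the paper's, needs either an extra input guaranteeing $\calE(h)_x=\calO(\utr)_x$ at some point of $D$, or a new argument on the locus where $\calE(h)$ is not full.
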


\begin{proof}
Since \(\calE(h)\) is coherent, its quotient sheaf \(\calO(\utr)/\calE(h)\) is supported on a proper analytic subset \(Z \subset D\). Outside \(Z\), we have \(\calE(h)=\calO(\utr)\).
By \cite[Chapter VIII-(7.3)]{Demailly2012}, analytic subsets are \(L^2\)-negligible. Thus, the optimal \(L^2\)-estimate property for \((n,p)\)-forms is preserved on \(D\setminus Z\) with respect to smooth strictly \(p\)-psh weights on \(D\).
This allows us to apply Proposition \ref{prop:restriction-pn} to the domain \(D\setminus Z\), which implies that for almost every parallel \(p\)-dimensional subspace \(S_{z''}\), the restriction \(h|_{S_{z''}\setminus Z}\) satisfies the optimal \(L^2\)-estimate property for \((p,p)\)-forms. By Corollary \ref{cor:dual-to-grif-p}, this is equivalent to \(h|_{S_{z''}\setminus Z}\) being uniform Griffiths \(p\)-semi-positive.
Finally, since \(|s|^2_{h^*}\) is strongly upper semi-continuous, Proposition \ref{prop:sing-grif-basic}(c) and (d) allows us to cross the analytic set \(Z\) and upgrade the slice-wise positivity to the entire domain \(D\). Hence, \(h\) is uniform Griffiths \(p\)-semi-positive.
\end{proof}

\begin{remark}
It is unknown  whether the solvability of the \(\bar{\partial}\)-equation for \((n,p)\)-forms with optimal \(L^2\)-estimates alone implies the coherence of the multiplier submodule sheaf. A related question, posed by Inayama \cite[Conjecture 1.1]{Inayama2022}, asks whether Griffiths semi-positivity implies the coherence of multiplier submodule sheaves.   Recently, Inayama \cite{Inayama22} proved that when the unbounded locus of \(\det h\) is discrete, the Griffiths semi-positivity of \((E,h)\) implies the coherence of \(\calE(h)\). Zou \cite{Zou22} proved that this coherence holds when \(\det h\) has analytic singularities. Furthermore, Yang \cite{Yang Nagoya} proved that the coherence is guaranteed when the non-integral points of \((\det h)^2\) are discrete.
\end{remark}


\begin{thebibliography}{99}

\bibitem{AG62}
A. Andreotti, H. Grauert:
Th\'eor\`emes de finitude pour la cohomologie des espaces complexes.
Bull. Soc. Math. France {\bf 90}: 193--259 (1962).
 
\bibitem{BP08} B. Berndtsson, M. P\u{a}un: Bergman kernels and the pseudoeffectivity of relative canonical bundles. Duke Math. J. {\bf 145}: 341--378 (2008).

\bibitem{Blocki13} Z. B{\l}ocki: Suita conjecture and the Ohsawa-Takegoshi extension theorem. Invent. Math. {\bf 193}(1): 149--158 (2013).

\bibitem{Cataldo98}		 M. A. de~Cataldo:		Singular Hermitian metrics on vector bundles.
		{J. Reine Angew. Math.} {\bf 502}: 93--122 (1998).

\bibitem{Dem-82}
 J.~P.~Demailly: Estimations {$L^2$} pour l'op\'erateur $\bar\partial$ d'un fibr\'e
  vectoriel holomorphe semi-positif au-dessus d'une vari\'et\'e k\"ahl\'erienne
  compl\`ete. Ann. Sci. \'{E}cole Norm. Sup.   {\bf 15}(3): 457--511 (1982).

\bibitem{Dem-93JDG}  J.~P.~Demailly,
	  A numerical criterion for very ample line bundles.
	  J. Differential Geom. {\bf 37}(2):323--374 (1993).

\bibitem{Demailly2012}
J. P. Demailly: Complex analytic and differential geometry, (2012). Available at
 \url{https://www-fourier.ujf-grenoble.fr/~demailly/manuscripts/agbook.pdf}.

\bibitem{DNW21} F. Deng, J. Ning, Z. Wang: Characterizations of plurisubharmonic functions. Sci. China. Math. {\bf 64}(9): 1959--1970 (2021).

\bibitem{DNWZ22} F. Deng, J. Ning, Z. Wang, X. Zhou: Positivity of holomorphic vector bundles in terms of $L^p$-estimates for $\overline{\partial}$. Math. Ann. {\bf 385}: 575--607 (2023).

\bibitem{DNWZ23}	F. Deng, J. Ning, Z. Wang, X. Zhou:
		Approximation and extension of Hermitian metrics on holomorphic vector bundles over Stein manifolds.  C. R. Math. Acad. Sci. Paris {\bf 362}: 1707--1715 (2024).

\bibitem{DWZZ18} F. Deng, Z. Wang, L. Zhang, X. Zhou: New characterizations of plurisubharmonic functions and positivity of direct image sheaves. Amer. J. Math. {\bf 146}(3): 751--768 (2024).

\bibitem{DengZhang21} F. Deng, X. Zhang: Characterizations of curvature positivity of Riemannian vector bundles and convexity or pseudoconvexity of bounded domains in $\mathbb{R}^n$ or $\mathbb{C}^n$ in terms of $L^2$-estimate of $d$ or $\overline{\partial}$ equation. J. Funct. Anal. 281(9): no. 109184 (2021).

\bibitem{GuanZhou12} Q. Guan, X. Zhou: Optimal constant problem in the $L^2$ extension theorem. C. R. Math. Acad. Sci. Paris {\bf  350}(15-16): 753--756 (2012).

\bibitem{GuanZhou15} Q. Guan, X. Zhou: A solution of an $L^2$ extension problem with an optimal estimate and applications. Ann. of Math.(2) {\bf 181}(3): 1139--1208 (2015).
    
 \bibitem{HL09a}
F. R. Harvey, H. B. Lawson, Jr.:
An introduction to potential theory in calibrated geometry.
Amer. J. Math. {\bf 131}(4): 893--944 (2009).

\bibitem{HL09b}
F. R. Harvey, H. B. Lawson, Jr.:
Duality of positive currents and plurisubharmonic functions in calibrated geometry.
Amer. J. Math. {\bf 131}(5): 1211--1239 (2009).   

\bibitem{HL11}
F. R. Harvey, H. B. Lawson, Jr.:
Plurisubharmonicity in a general geometric context.
In:  {Geometry and analysis. No. 1}, volume 17 of  {Adv. Lect. Math. (ALM)}, pages 363--402. Int. Press, Somerville, MA, 2011.

\bibitem{HL12}
F. R. Harvey, H. B. Lawson, Jr.:
Geometric plurisubharmonicity and convexity: an introduction.
 {Adv. Math.}, 230(4-6): 2428--2456, 2012.

\bibitem{HL13}
F. R. Harvey, H. B. Lawson, Jr.:
\(p\)-convexity, \(p\)-plurisubharmonicity and the Levi problem.
 {Indiana Univ. Math. J.}, 62(1): 149--169, 2013.


\bibitem{Hor65} L. H\"ormander: $L^2$ estimates and existence theorems for the $\overline{\partial}$ operator. Acta Math. {\bf 113}: 89--152 (1965).

\bibitem{HosonoInayama} G. Hosono, T. Inayama: A converse of H\"ormander's $L^2$-estimate and new positivity notions for vector bundles. Sci. China Math. 64(8): 1745-1756 (2021).

\bibitem{Inayama2021} T. Inayama: From H\"ormander's $L^2$-estimates to partial positivity. C. R. Math. Acad. Sci. Paris {\bf 359}: 169--179 (2021).


\bibitem{Inayama2022} T. Inayama: Nakano positivity of singular Hermitian metrics and vanishing theorems of Demailly-Nadel-Nakano type. Algebr. Geom. {\bf 9}(1): 69--92 (2022).


\bibitem{Inayama22}  T. Inayama: Singular Hermitian metrics with isolated singularities. Nagoya Math. J. {\bf 248}: 980--989 (2022).


\bibitem{LMNZ24}  Z. Li, X. Meng, J. Ning, X. Zhou:  A converse to the Skoda $L^2$ division theorem.
 \url{arXiv:2405.18713} (2024).

\bibitem{LMNZ26}  Z. Li, X. Meng, J. Ning, X. Zhou:  A converse to the Skoda $L^2$ division theorem II.
 \url{arXiv:2609.33300} (2026).

 \bibitem{Liu2025} Z. Liu:An optimal  \(L^2\) extension for continuous \(L^2\)-optimal Hermitian metrics. \url{arXiv:2506.22840} (2025).

\bibitem{Liu2025v2} Z. Liu: Convolution regularization preserves the \(L^2\)-estimate property for \((1,n)\)-forms. \url{arXiv:2506.22834v2} (2026).

 
\bibitem{LXYZ24} Z. Liu, B. Xiao, H. Yang, X. Zhou: Multiplier Submodule Sheaves and a problem of Lempert.  \url{arXiv:2111.13452v2} (2024).

\bibitem{LXYZ24} Z. Liu, B. Xiao, H. Yang, X. Zhou: Multiplier submodule sheaves, $L^2$ extension and a problem of Lempert. J. Reine Angew. Math. {\bf 833}:  111--157 (2026).
    
 \bibitem{LZ26} Z. Liu, X. Zhang:  Skoda's $L^2$ division theorem for $L^2$-optimal pairs. To appear in Anal. PDE.  \url{arXiv:2411.15670} (2024).
     

\bibitem{Nadel}
{A. M. Nadel},  Multiplier ideal sheaves and K\"ahler-Einstein metrics of positive scalar curvature.
  Proc. Nat. Acad. Sci. U.S.A. {\bf 86} :7299--7300 (1989), and  Ann. of Math. (2) {\bf 132}(3):549--596 (1990).

\bibitem{OT87} T. Ohsawa, K. Takegoshi: On the extension of $L^2$ holomorphic functions. Math. Z. {\bf 195}(2): 197--204
(1987).

\bibitem{PT18} M. P\u{a}un, S. Takayama: Positivity of twisted relative pluricanonical bundles and their direct images. J. Algebraic Geom. {\bf 27}: 211--272 (2018).

\bibitem{Rau15} H. Raufi: Singular hermitian metrics on holomorphic vector bundles. Ark. Mat. {\bf 53}(2): 359--382 (2015).

\bibitem{Siu82}
Y.-T. Siu:
Complex-analyticity of harmonic maps, vanishing and Lefschetz theorems.
J. Diff. Geom. {\bf 17}: 55--138 (1982).

\bibitem{V2010} M. Verbitsky: Plurisubharmonic functions in calibrated geometry and $q$-convexity. 
Math. Z. {\bf 264}(4): 939--957 (2010).

\bibitem{WYY26}
M. Wang, X. Yang, S.-T. Yau:
Existence of Hermitian metrics with prescribed Hermitian-Yang-Mills tensors I.
\url{arXiv:2603.10611} (2026).

\bibitem{Watanabe} Y. Watanabe: Curvature operator of holomorphic vector bundles and $L^2$-estimate condition for $(n,q)$ and $(p,n)$-forms. Tohoku Math. J. (2) {\bf 77}(4): 445--471 (2025).

\bibitem{Yang Nagoya} H. Yang: Coherence of multiplier submodule sheaves on generic complex tori. Nagoya Math. J. {\bf 260}: 666--673 (2025).

\bibitem{Yang18}
X. Yang:
RC-positivity, rational connectedness and Yau's conjecture.
Camb. J. Math. {\bf 6}(2): 183--212 (2018).

\bibitem{Yang20}
X. Yang:
RC-positive metrics on rationally connected manifolds.
Forum Math. Sigma {\bf 8}: Paper No. e53, 19 pp (2020).


\bibitem{Zou22} Y.  Zou: Note on the singular Hermitian metrics with analytic singularities. To appear in  Ann. Inst. Fourier (Grenoble). \url{arXiv:2209.05053} (2022).




\end{thebibliography}
\end{document}